\begin{document}
\title[Slices for maximal parabolic subalgebras]{Slices for maximal parabolic subalgebras of a semisimple Lie algebra $^*$}
\thanks{$^*$ This work was supported in part by the Israel Scientific Foundation grant 797/14 and in part by LABEX MILYON (ANR-10-LABX-0070)
of Universit\'e de Lyon, within the program ``Investissements d'Avenir'' (ANR-11-IDEX-0007)
operated by the French National Research Agency (ANR)}
\author[Florence Fauquant-Millet and Polyxeni Lamprou ]{Florence Fauquant-Millet and Polyxeni Lamprou }

\maketitle


\newenvironment{theorem}{\noindent \bf Theorem.\it}{ \rm }
\newenvironment{proposition}{\noindent \bf Proposition.\it}{ \rm }
\newenvironment{lemma}{\noindent \bf Lemma.\it}{ \rm }
\newenvironment{corollary}{\noindent \bf Corollary.\it}{ \rm }
\newenvironment{definition}{\noindent \bf Definition.\it}{ \rm }
\newenvironment{remark}{\noindent \bf Remark.\it}{ \rm }

\newcommand{{\ind}}{\operatorname{ind}}
\newcommand{{\ad}}{\operatorname{ad}}
\newcommand{{\card}}{\operatorname{Card}}
\newcommand{{\rk}}{\operatorname{rk}}
\newcommand{{\id}}{\operatorname{id}}
\newcommand{{\trp}}{\mathfrak{p}_{\pi', E}}
\newcommand{{\liea}}{\mathfrak{a}}
\newcommand{{\lieg}}{\mathfrak{g}}
\newcommand{{\liep}}{\mathfrak{p}}
\newcommand{{\lieh}}{\mathfrak{h}}
\newcommand{{\lien}}{\mathfrak{n}}
\newcommand{{\liel}}{\mathfrak{l}}
\newcommand{{\lieb}}{\mathfrak{b}}
\newcommand{{\liesl}}{\mathfrak{sl}}
\newcommand{{\bbN}}{\mathbb{N}}
\newcommand{{\bbC}}{\mathbb{C}}

\def\a{\mathfrak a}
\def\g{\mathfrak g}
\def\h{\mathfrak h}
\def\p{\mathfrak p}
\def\n{\mathfrak n}
\def\b{\mathfrak b}
\def\l{\mathfrak l}
\def\o{\mathfrak o}
\def\ep{\varepsilon}
\def\al{\alpha}
\def\s{\mathfrak s}
\def\l{\mathfrak l}

\begin{abstract}
Let $\p$ denote a maximal (truncated) parabolic subalgebra of a simple Lie algebra $\g$. In [F. Fauquant-Millet and A. Joseph, Semi-centre de l'alg\`ebre enveloppante d'une sous-alg\`ebre parabolique d'une
alg\`ebre de Lie semi-simple, Ann. Sci. \'Ecole. Norm. Sup. (4)
{\bf{38}} (2005) no.2, 155-191] it was shown in many cases that the Poisson centre $Y(\p)$ is a polynomial algebra. We construct a slice for the coadjoint action of $\p$, thus extending a theorem of Kostant. The role of the principal $\s\l_2$-triple is played by an adapted pair, a notion introduced in [A. Joseph and P. Lamprou, Maximal Poisson commutative subalgebras for truncated parabolic subalgebras of maximal
index in $\mathfrak{sl}_n$, Transform. Groups {\bf 12} (2007), no. 3, 549-571].
\end{abstract}
\noindent Key words : Slices, Truncated Parabolic Subalgebras, Poisson Centre.\\
AMS Classification : 17B35, 16W22.

\section{Introduction}
Throughout this paper, we are working over an algebraically closed field $k$ of characteristic $0$.

\subsection{}Let $\mathfrak{a}$ be a finite dimensional algebraic Lie algebra, $\mathfrak{a}^*$ its dual space and $S(\mathfrak{a})$ the symmetric algebra of $\mathfrak{a}$. Then if $\{x_1,\, x_2,\, \dots, \,x_n\}$ is a basis of
$\mathfrak{a}$, $S(\liea)$ identifies with $k[x_1,\, x_2,\, \dots,\, x_n]$, the polynomial algebra with generators $x_1,\,\ldots,\,x_n$, and with the
algebra of polynomials on $\mathfrak{a}^*$. For any two polynomials $f, g$
in $S(\mathfrak{a})$ we define the Poisson bracket of $f$ and
$g$ by the formula :
$$\{f,\, g\} = \sum\limits_{i=1}^n\sum\limits_{j=1}^n\frac{\partial f}{\partial
x_i}\frac{\partial g}{\partial x_j}[x_i,\, x_j],$$
where $[\,,\,]$ is the Lie bracket of $\a$.
For any two
elements $x, y \in \mathfrak{a}$ we have that $\{x,\, y\} = [x,\, y]$
and each $g \mapsto \{g,\, f\}$ is a derivation of the associative
algebra $S(\mathfrak{a})$ for fixed $f\in S(\mathfrak{a})$.

We denote by $\ad$ the adjoint action of $\a$ on itself defined by its Lie bracket.

\subsection{} 
Denote by $Y(\mathfrak{a})$ the Poisson centre of $S(\mathfrak a)$, that is the space :
$$Y(\mathfrak{a}) = \{f\in S(\mathfrak{a})\mid \forall\,g\in S(\mathfrak{a}),\;\; \{f,\, g\} = 0\}.$$
Note that $Y(\mathfrak{a})=S(\a)^{\a}:=\{f\in S(\a)\mid \forall\, x\in \a,\;\; \{x,\,f\}=0\}$ and that $Y(\a)$ is a subalgebra
of  $S(\a)$.
We denote by $Sy(\a)$ the Poisson semicentre of $S(\a)$, $Sy(\a):=\{f\in S(\a)\mid \forall\, x\in \a,\;\;\{x,\,f\}\in k\,f\}$; it is also
a subalgebra of $S(\a)$.
Clearly, $Y(\a)\subset Sy(\a)$ and this inclusion is in general strict. We will say that $S(\a)$ has no proper semi-invariants when
equality holds. It is the case for instance  when $\a$ is a truncated (bi)parabolic subalgebra of a semisimple Lie algebra, \cite[lemme 2.5]{FJ3}.

The Poisson centre $Y(\liea)$ - and in particular whether or not it is a polynomial algebra - has been studied in several cases, for example in the case where $\liea$ is semisimple, or a centralizer of a nilpotent element of a semisimple Lie algebra \cite{PPY}, or a (truncated) (bi)parabolic subalgebra of a semisimple Lie algebra \cite{FJ2, FJ3, J6, J7}.

\subsection{}

Recall that $\a$ acts via the coadjoint action on $\a^*$ and denote  this action still by $\ad$. Set $\ind \, \mathfrak{a} =\operatorname{min}_{\xi\in a^*}\{\operatorname{codim}\,(\ad\mathfrak{a})\,\xi\}$ and call an element $\xi$ in $\mathfrak{a}^*$ regular if $\operatorname{codim}\,(\ad\mathfrak{a})\,\xi= \ind \,\mathfrak{a}$.
Denote by $\a^*_{reg}$ the set of regular elements of $\a^*$.

\subsection{}\label{ap} Assume that $Y(\mathfrak{a})$ is a polynomial algebra  in $\ell := \ind \mathfrak{a}$ generators. Let $\{f_1,\, f_2,\, \dots,\, f_{\ell}\}$ be
a set of homogeneous generators in $Y(\mathfrak{a})$ and let $\deg
f_i$ denote the degree of $f_i$ for all $i,\,1\leq i\leq
\ell$.

In \cite{JL} an {\it adapted pair} for $\liea$ was defined as a pair $(h,\,y)\in \liea\times \liea^*$ such that :
\begin{enumerate}
\item[(i)] $h$ is an $\operatorname{ad}$-semisimple element, $y$ is regular and
$(\operatorname{ad}h)\,y=-y$.

\item[(ii)] Let $V$ be an $\ad h$-stable subspace of $\mathfrak{a}^*$ such
that $(\operatorname{ad}\mathfrak{a})\,y\oplus V = \mathfrak{a}^*$. The $\ad h$-eigenvalues $m_i,\,1\leq i\leq
\ell$, of an $\operatorname{ad}h$-stable basis of $V$ are all non-negative and satisfy :
$\sum\limits_{i=1}^{\ind \mathfrak{a}}\deg f_i
= \sum\limits_{i=1}^{\ind \mathfrak{a}}(m_i + 1).$
\end{enumerate}

\subsection{} Recall the Slice Theorem~\cite[Thm 6]{JL}, which
obtains from the analysis of Kostant \cite[Thm 5, Thm 7]{K}:\\

\begin{theorem}\label{slice} Assume that $Y(\liea)$ is polynomial in $\ind\, \liea$ generators  and let $(h,\, y)$ be an adapted pair for $\liea$
in the sense of~\ref{ap}. Then :
\begin{enumerate}
\item The map $Y(\mathfrak{a})\longrightarrow R[y + V]$ defined by
restriction of functions is an isomorphism of algebras, where $R[y+V]$ denotes the algebra of regular functions on $y+V$.
\item One has $m_i + 1 = \deg f_i$, for all $i,\, 1\leq i\leq
\ind \mathfrak{a}$, up to a permutation of indices.
\end{enumerate}
\end{theorem}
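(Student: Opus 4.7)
The plan is to grade both sides by a suitable $k^*$-action, prove injectivity via transversality, and then use Kostant's Jacobian argument together with a Hilbert series comparison to obtain both surjectivity and the degree equality simultaneously from the adapted pair sum condition.

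\textbf{Grading and injectivity.} I would first introduce the cocharacter $\alpha_t:\mathfrak{a}^*\to\mathfrak{a}^*$, $\alpha_t=t\cdot e^{(\log t)\ad h}$. Since $(\ad h)y=-y$ the element $y$ is fixed by $\alpha_t$; since $V$ is $\ad h$-stable with non-negative weights $m_1,\ldots,m_\ell$, the slice $y+V$ is $\alpha_t$-stable and $\alpha_t$ acts on $V$ with strictly positive weights $m_i+1$. Thus $R[y+V]=S(V^*)$ is a non-negatively graded polynomial algebra with generators in degrees $m_i+1$. Because every $f_i\in Y(\mathfrak{a})=S(\mathfrak{a})^{\mathfrak{a}}$ is $\ad\mathfrak{a}$-invariant, and in particular invariant under the one-parameter group $e^{\tau\ad h}$, the twisting collapses and $\alpha_t^*f_i=t^{\deg f_i}f_i$, so the restriction map $Y(\mathfrak{a})\to R[y+V]$ is a morphism of graded algebras. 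For injectivity I would note that the differential at $(e,y)$ of the coadjoint orbit map $A\times(y+V)\to\mathfrak{a}^*$, with $A$ an algebraic group with Lie algebra $\mathfrak{a}$, is $(x,v)\mapsto(\ad x)y+v$, which is surjective by the transversality hypothesis; hence the orbit of $y+V$ is Zariski dense and any $\mathfrak{a}$-invariant vanishing on $y+V$ is zero.

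\textbf{Jacobian, h.s.o.p., and Hilbert series.} Following Kostant, I view the differentials $df_i|_y$ as elements of $\mathfrak{a}$: being derivatives of $\mathfrak{a}$-invariants they annihilate $(\ad\mathfrak{a})y$, hence lie in $\mathfrak{a}^y$; by regularity of $y$ they are linearly independent; and since $\dim\mathfrak{a}^y=\ell$, they form a basis of $\mathfrak{a}^y$. Dualizing $\mathfrak{a}^*=(\ad\mathfrak{a})y\oplus V$ gives $\mathfrak{a}=\mathfrak{a}^y\oplus V^\perp$, so restriction $\mathfrak{a}^y\to V^*$ is an isomorphism, and consequently the Jacobian of $(f_i|_{y+V})_i$ at $y$ is non-zero. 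The restrictions $f_i|_{y+V}$ are therefore algebraically independent homogeneous elements in $R[y+V]$; combining this with the fact that $\alpha_t$ contracts $y+V$ to $\{y\}$ as $t\to 0$ (all weights $m_i+1>0$), one concludes that they form a homogeneous system of parameters, whence by Cohen-Macaulayness $R[y+V]$ is a finite free graded module over $k[f_1|_{y+V},\ldots,f_\ell|_{y+V}]$. The graded fibre $N$ has Hilbert series
\[
H_N(t)=\frac{\prod_{i=1}^\ell(1-t^{\deg f_i})}{\prod_{i=1}^\ell(1-t^{m_i+1})},
\]
a polynomial with $H_N(0)=1$ and $\deg_t H_N=\sum\deg f_i-\sum(m_i+1)=0$ by the adapted pair condition. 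Hence $H_N\equiv 1$, so $N=k$, the restriction map is surjective, proving (1). The resulting identity $\prod(1-t^{\deg f_i})=\prod(1-t^{m_i+1})$, combined with unique factorization into cyclotomic polynomials, gives the multiset equality $\{\deg f_i\}=\{m_i+1\}$ of (2).

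\textbf{Main obstacle.} The delicate step is verifying that the algebraically independent restrictions $f_i|_{y+V}$ really form a homogeneous system of parameters, equivalently that the null cone of $Y(\mathfrak{a})$ meets $y+V$ only at $y$; the Jacobian computation at $y$ yields algebraic independence but does not by itself rule out an excess-dimensional null-cone intersection, and one must exploit the $\alpha_t$-contraction to $y$ together with a dimension count. Once this is in place, the remarkable feature of the argument is that the single numerical identity $\sum\deg f_i=\sum(m_i+1)$ guaranteed by the adapted pair hypothesis is precisely what is needed to collapse the Hilbert series quotient to $1$, yielding surjectivity and the degree equality at one stroke.
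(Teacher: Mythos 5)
Your overall architecture (the contracting $k^*$-action $\alpha_t$, injectivity from dominance of $A\times(y+V)\to\liea^*$, and a Hilbert-series comparison driven by the sum condition $\sum\deg f_i=\sum(m_i+1)$) is the right skeleton, and the paper itself gives no proof — it quotes the result from [JL, Thm 6], which in turn follows Kostant. But your route to surjectivity contains a genuine gap, and it sits exactly at the step you half-flag as "delicate". You assert that the differentials $df_i|_y$ are linearly independent "by regularity of $y$". Regularity of $y$ only gives $\dim\liea^y=\ind\liea$ and hence that the $\ell$ differentials, which do lie in $\liea^y$, \emph{could} span it; it does not force them to be independent at the particular point $y$. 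For non-reductive $\liea$ the locus where the $df_i$ become dependent is the zero set of the fundamental semi-invariant, which is a proper closed invariant cone that may well contain regular elements; moreover $y$ lies in the null cone of $Y(\liea)$ (every homogeneous invariant of positive degree vanishes at $y$ by the grading you set up), so if that semi-invariant is a non-constant invariant it vanishes at $y$ and your Jacobian is actually zero there. Nothing in the hypotheses of the theorem as stated rules this out a priori — indeed, ruling it out is essentially equivalent to what you are trying to prove. Consequently the h.s.o.p. property of the restrictions $f_i|_{y+V}$, on which your Cohen--Macaulay/free-module argument rests, is not established. (Your "main obstacle" paragraph locates the difficulty in passing from algebraic independence to an h.s.o.p. via the null-cone dimension count; in fact the contraction argument you sketch there does work \emph{once} the differentials at $y$ are known to be independent — the real missing ingredient is that independence.)

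The standard argument of [JL]/Kostant avoids Jacobians entirely. After your (correct) injectivity step, the restrictions $g_i=f_i|_{y+V}$ are $\ell$ algebraically independent homogeneous elements of $R[y+V]=S(V^*)$, a polynomial ring with generators in degrees $e_i=m_i+1$. For any $N$, the subalgebra of $S(V^*)$ generated by its homogeneous elements of degree $\le N$ has transcendence degree equal to the number of $e_i\le N$, and it contains every $g_j$ with $\deg g_j\le N$; hence, after sorting both sequences increasingly, $\deg g_i\ge e_i$ for all $i$. The hypothesis $\sum\deg f_i=\sum(m_i+1)$ then forces $\deg g_i=e_i$ termwise, which is assertion (2); equality of the resulting Hilbert series of $k[g_1,\dots,g_\ell]$ and of $S(V^*)$, together with the inclusion of graded algebras, gives surjectivity and hence (1). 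If you want to keep your h.s.o.p./Hilbert-series mechanism, you must first run this degree comparison (or an equivalent substitute) to get the Jacobian statement, at which point the Cohen--Macaulay detour becomes unnecessary.
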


A linear subvariety $y+V$ satisfying (1) of the above theorem is called a {\it Weierstrass section} ~\cite{FJ4} or an {\it algebraic slice}~\cite[7.6]{J8}.
When $\mathfrak{a}=\lieg$ is a finite dimensional semisimple Lie algebra, it is well-known that the Poisson centre $Y(\g)$ is polynomial in $\ind \,\mathfrak{g} =\operatorname{rk}\,\mathfrak{g}$ generators (for an exposition see {\cite[7.3.8]{D}}). The above theorem is known as the Kostant Slice Theorem. In this case $V=\mathfrak g^x$, the centralizer of $x$ in $\lieg$, where $\{x,\, h, \,y\}$ is a principal $\mathfrak{sl}_2$ - triple with $[h,\, x] = x,\,[h,\, y] = -y$ and $[x,\, y]=h$.

Let $G$ be the adjoint group of $\g$. The affine space $y+\lieg^x$ is called  a {\it slice for the coadjoint action of $\lieg$} \cite[7.3]{J8} since every $G$ - orbit in $G(y+\lieg^x)$ meets  transversally $y+\lieg^x$ at exactly one point and $\overline{G(y+\lieg^x)} = \lieg^*$ where
$\overline{G(y+\lieg^x)}$ is the Zariski closure of $G(y+\lieg^x)$.  Moreover in that particular case, $G(y+\lieg^x)=\lieg^*_{reg}$. (Note that in general, $A(y+V)$ is a proper subset of $\liea^*_{reg}$, where $A$ is the adjoint group of $\a$).

\subsection{}\label{adaptedpair1} In \cite{JS}, under further assumptions on $\liea$, namely that $\liea$ is unimodular and that its fundamental semi-invariant is an invariant, the authors showed that condition (i) of the definition of an adapted pair implies condition (ii) and actually (2) of the Theorem \ref{slice}. Furthermore,
as a consequence of \cite[Thm. 1.11 (i)]{D3}, any Lie algebra $\a$ such that $S(\a)$ has no proper semi-invariants is unimodular.
Thus a truncated parabolic subalgebra $\a$ of a semisimple Lie algebra $\lieg$ is unimodular  and  its fundamental semi-invariant is an invariant.

Since in this work we are interested precisely in  truncated parabolic algebras, we will reformulate the definition of an adapted pair and the Slice Theorem, following \cite{JS}.\\

\begin{definition} An adapted pair for $\liea$ is a pair $(h, \,y)\in \liea\times \liea^*$ such that $h$ is an $\operatorname{ad}$ - semisimple element, $y$ is regular and
$(\operatorname{ad}h)\,y=-y$.
\end{definition}\\

\noindent Then Theorem \ref{slice} becomes :\\

\begin{theorem} \cite[Corollary 2.3]{JS}
Let $\liea$ be a finite dimensional unimodular Lie algebra whose fundamental semi-invariant is an invariant (for example $\liea$ is a truncated parabolic) and suppose that $\liea$ admits an adapted pair $(h, \,y)$. Let $V$ be a subspace of $\mathfrak{a}^*$ such
that
$(\operatorname{ad}\mathfrak{a})\,y\oplus V = \mathfrak{a}^*$
and let $m_i,\,1\leq i\leq
\ell:=\ind \mathfrak{a}$ be the $\ad h$-eigenvalues of an $\operatorname{ad}h$-stable
 basis of $V$. If $Y(\liea)$ is polynomial in $\ell$ generators, then
\begin{enumerate}
\item The map $Y(\mathfrak{a})\longrightarrow R[y + V]$ defined by
restriction of functions is an isomorphism of algebras.
\item The degrees of a set of homogeneous generators of $Y(\liea)$ are the $m_i + 1,\, 1\leq i\leq
\ell$.
\end{enumerate}
\end{theorem}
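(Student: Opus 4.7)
The plan is to reduce the statement to the earlier Slice Theorem (Theorem~\ref{slice}) by showing that, under the additional hypotheses of unimodularity and invariance of the fundamental semi-invariant, the adapted-pair condition~(i) forces the full condition~(ii) of~\ref{ap} --- namely non-negativity of the $m_i$ together with the identity $\sum_i (m_i + 1) = \sum_i \deg f_i$.

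For the eigenvalue-sum half, I would argue as follows. Since $\ad h$ preserves the decomposition $\liea^* = (\ad\liea)y \oplus V$, unimodularity gives $\text{tr}(\ad h|_{\liea^*}) = 0$, so $\sum m_i = -\text{tr}(\ad h|_{(\ad\liea)y})$. The surjection $\liea/\liea^y \to (\ad\liea)y$, $x\mapsto (\ad x)y$, shifts each $\ad h$-eigenvalue down by $1$ --- a direct consequence of $(\ad h) y = -y$ together with the Jacobi identity --- so this trace equals $\text{tr}(\ad h|_{\liea/\liea^y}) - (\dim\liea - \ell)$. The Kirillov form $\omega_y(x,z) = y([x,z])$ is a non-degenerate symplectic form on $\liea/\liea^y$, and the identity $\omega_y([h,x],z) + \omega_y(x,[h,z]) = \omega_y(x,z)$ (again obtained from $(\ad h)y = -y$ and Jacobi) forces the $\ad h$-spectrum on $\liea/\liea^y$ to be symmetric about $\tfrac{1}{2}$. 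Hence that trace equals $\tfrac{1}{2}(\dim\liea - \ell)$, and
$$\sum_{i=1}^{\ell} (m_i + 1) = \tfrac{1}{2}(\dim\liea + \ell).$$

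For the degree-sum half, I would invoke the formula $\sum_i \deg f_i = \tfrac{1}{2}(\dim\liea + \ell)$, valid under our hypotheses: the fundamental semi-invariant $p_{\liea}$ has degree $\tfrac{1}{2}(\dim\liea - \ell)$, and when it is an invariant it is expressed in $Y(\liea)$ via a Jacobian-type determinant in $f_1,\dots,f_\ell$, yielding $\sum \deg f_i - \ell = \deg p_{\liea}$; see~\cite[Thm.~1.11]{D3}. This matches the previous expression exactly, so the degree identity of~\ref{ap}(ii) holds. Non-negativity of the $m_i$ is then handled separately: regularity of $y$ combined with the symmetry just established shows that the strictly negative $\ad h$-weight part of $\liea^*$ is forced into $(\ad\liea)y$, so the canonical multiset of $\ad h$-weights on $V \cong \liea^*/(\ad\liea)y$ is non-negative. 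With condition~\ref{ap}(ii) thus verified, Theorem~\ref{slice} delivers~(1) and~(2).

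The main obstacle is the symmetry computation for $\ad h$ on the orbit tangent space $\liea/\liea^y$ via the Kirillov form, since this is where the adapted-pair hypothesis $(\ad h) y = -y$ is exploited in a non-trivial way; the rest is essentially bookkeeping, matching $\tfrac{1}{2}(\dim\liea + \ell)$ on both sides and reducing to Theorem~\ref{slice}. The non-negativity of the $m_i$, while needed verbatim for that reduction, should follow comfortably from the structure assembled along the way.
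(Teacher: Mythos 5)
You should first note that the paper does not prove this statement: it is quoted verbatim from \cite[Corollary 2.3]{JS}, so there is no internal proof to compare against, only the sketch in \ref{adaptedpair1}. Your overall strategy --- verify condition (ii) of \ref{ap} from condition (i) plus the extra hypotheses and then invoke Theorem \ref{slice} --- is the right reading of that sketch, and your eigenvalue-sum computation is correct and essentially the standard one: unimodularity kills the trace of $\ad h$ on $\liea^*$, the orbit map $x\mapsto (\ad x)y$ shifts eigenvalues by $-1$, and the identity $\omega_y([h,x],z)+\omega_y(x,[h,z])=\omega_y(x,z)$ together with non-degeneracy of $\omega_y$ on $\liea/\liea^{y}$ forces the spectrum there to be symmetric about $1/2$, giving $\sum(m_i+1)=\tfrac12(\dim\liea+\ell)$.

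The two remaining steps both have genuine gaps. For the degree sum, both of your supporting claims are false: the fundamental semi-invariant does not have degree $\tfrac12(\dim\liea-\ell)$ (for $\liea=\lieg$ semisimple it is a nonzero constant, while $\tfrac12(\dim\lieg-\rk\,\lieg)=|\Delta^+|$), and $\sum\deg f_i-\ell$ is not $\deg p_{\liea}$ (again for $\lieg$ semisimple the left side is $|\Delta^+|$, the right side $0$); moreover \cite{D3} concerns unimodularity, not degrees of invariants. Your two errors cancel to produce the correct value $\tfrac12(\dim\liea+\ell)$, but the honest route is different: since $p_{\liea}$ is an invariant it has $\ad h$-weight $0$, since $y$ is regular $p_{\liea}(y)\neq 0$, and since $y$ is a weight $-1$ eigenvector, homogeneity forces $\deg p_{\liea}=0$, i.e.\ the singular set has codimension at least $2$; only then do the degree-sum theorems of \cite{PPY} and Ooms--Van den Bergh yield $\sum\deg f_i=\tfrac12(\dim\liea+\ell)$ for a polynomial $Y(\liea)$. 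Second, non-negativity of the $m_i$ does not ``follow comfortably'': the symmetry you established lives on $(\ad\liea)\,y$ (about $-1/2$) and says nothing about which weights land in $V$; only the sum $\sum m_i=\tfrac12(\dim\liea-\ell)\geq 0$ is controlled. Indeed the paper uses the occurrence of some $m_i<0$ in \ref{theoryap} as a test for \emph{non}-polynomiality of $Y(\liea)$, so non-negativity cannot be a consequence of the adapted-pair hypotheses alone; it is genuinely part of conclusion (2) and must be extracted from the polynomiality of $Y(\liea)$. Consequently the reduction to Theorem \ref{slice} cannot be run in the order you propose: one has to rework the Kostant/Poincar\'e-series comparison so that non-negativity comes out at the end rather than being fed in at the start.
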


If moreover $S(\a)$ has no proper semi-invariants then, after \cite[Lemma 3.2]{FJ4} the Weierstrass section $y+V$ is a slice for the coadjoint action of $\a$, in the sense of \ref{slice}.

\subsection{}\label{theoryap} Suppose that we have constructed an adapted pair $(h, \,y)$ for $\mathfrak a$  and retain the notations and hypotheses of Theorem \ref{adaptedpair1}. By the above, if $Y(\liea)$ is polynomial, the degrees of a set of generators are $\{m_i+1\,|\, 1\le i\le \ell\}$. This fact gives us indications for the polynomiality or non-polynomiality of $Y(\mathfrak a)$. If for example $m_i<0$ for some $i$, or if we can choose different complements $V,\, V'$ on which $\ad\, h$ has different set of eigenvalues we conclude that $Y(\liea)$ is not polynomial. For the truncated parabolic subalgebras we know by \cite{FJ2} that $Y(\liea)$ is included, up to a double gradation which respects the degrees, in a polynomial algebra whose degrees of generators are known, hence we have some information about the possible degrees of invariants. If this information is contradicted by the set $\{m_i\,|\, 1\le i\le \ell\}$ again we may conclude that $Y(\liea)$ is not polynomial. In any case, the existence of an adapted pair restricts us to searching for invariants having a defined set of degrees.

\subsection{} When $\liea$ is a truncated parabolic subalgebra of a semisimple Lie algebra $\lieg$, we know in many cases that $Y(\liea)$ is polynomial; for example, by \cite{FJ2} this is always true when $\lieg$ has only components of type $A$ or $C$ and by \cite{PPY}, this is also the case when the truncated parabolic is the centralizer of the highest root vector and $\lieg$ is simple not of type $E_8$. We will construct an adapted pair for all maximal parabolic subalgebras where we know (by the criterion of \cite{FJ2}, also given in Theorem~\ref{borel})
that the Poisson centre is polynomial. (This was done already in \cite{J6bis}, in the case of the centralizer.)\\ 

\noindent {\bf Acknowledgements.} We thank A. Joseph for suggesting this problem to us and in general for introducing us to truncated parabolic subalgebras and adapted pairs many years ago.

\section{The truncated parabolic subalgebra and the centre of its symmetric algebra}

\subsection{Truncated parabolic subalgebras of $\lieg$}

\subsubsection{}\label{standardnotation} Let $\mathfrak{g}$ be a finite dimensional
semisimple Lie algebra, $\mathfrak{h}$ a fixed
Cartan subalgebra of $\mathfrak{g}$, $\Delta$ the root system of
$\mathfrak{g}$ with respect to $\mathfrak{h}$ and $\pi$ a chosen set of
simple roots. We will adopt the labeling of \cite[Planches I-IX]{BOU} for the simple roots in $\pi$. Let $\Delta^+$ (resp. $\Delta^-$) denote the set of positive (resp. negative) roots. One has that $\Delta^-=-\Delta^+$ and $\Delta=\Delta^+\sqcup \Delta^-$. For any $\alpha\in \Delta$, let $\mathfrak g_\alpha$ denote the corresponding root space of $\mathfrak g$. Then $\mathfrak{g}
= \mathfrak{n}\oplus \mathfrak{h}\oplus \mathfrak{n}^-$, where
$\mathfrak{n}=\bigoplus\limits_{\alpha\in \Delta^+}\mathfrak g_\alpha$ and $\mathfrak{n}^-=\bigoplus\limits_{\alpha\in \Delta^-}\mathfrak g_\alpha$.
For all $\alpha\in\pi$, denote by $\alpha^\vee$ the corresponding coroot. One has that $\h=\bigoplus\limits_{\alpha\in\pi} k\,\alpha^{\vee}$.

\subsubsection{} For any subset $\pi^{\prime}$ of $\pi$, we will denote by
$\Delta_{\pi'}$ the set of roots generated by $\pi'$. We also denote by $\Delta_{\pi'}^+, \, \Delta_{\pi'}^-$ the sets of positive and negative roots in $\Delta_{\pi'}$ respectively. One defines the standard parabolic
subalgebra $\mathfrak{p}_{\pi^{\prime}}$ to be the algebra $\mathfrak{p}_{\pi'} = \lien \oplus
\lieh \oplus \lien_{\pi'}^-$ where $\lien^-_{\pi'}=\bigoplus\limits_{\alpha\in \Delta^-_{\pi'}}\g_\alpha$. Its opposed algebra then is $\mathfrak{p}_{\pi'}^- = \lien^- \oplus
\lieh \oplus \lien_{\pi'}$, with $\lien_{\pi'}$ defined similarly. The dual algebra $\mathfrak p_{\pi'}^*$ identifies with $\liep_{\pi'}^-$ via the Killing isomorphism $\varphi$.

\subsubsection{} For any parabolic subalgebra $\liep_{\pi'}$ of a simple Lie algebra $\lieg$ such that $\pi'\subsetneq\pi$, the Poisson centre of $S(\mathfrak p_{\pi'})$ reduces to scalars \cite[7.9]{J6}. In \cite{FJ2} and \cite{FJ3}, the authors study the Poisson semicentre $Sy(\liep_{\pi'})$ of $S(\liep_{\pi'})$. Since $\liep_{\pi'}$ is algebraic there exists a
canonical truncation $\liep_{\pi', E}$ of $\mathfrak p_{\pi'}$ such that $Sy(\liep_{\pi'}) =
Y(\liep_{\pi', E})=Sy(\liep_{\pi', E})$ \cite[2.4, 2.5, B.2]{FJ3}. In \cite[Section 5]{FJ3} $\liep_{\pi', E}$
has been described explicitly. It has the form
$\liep_{\pi', E} = \lien\oplus \lieh_E\oplus \lien^-_{\pi'}$,
where $\lieh_E$ is a truncation of $\lieh$; it is called the truncated Cartan subalgebra.
Its description involves some combinatorics of the Dynkin diagram of $\lieg$,
which we describe below.

\subsubsection{}\label{Orbits} Define an involution $j$ on $\pi$ by $j =
-w_0$, where $w_0$ denotes the longest element in the Weyl group
of $\Delta$. Extend the involution $i = -w_0^{\prime}$ of
$\pi^{\prime}$, where $w_0^{\prime}$ denotes the longest element
in the Weyl group of $\Delta_{\pi'}$, on $\pi\setminus
\pi^{\prime}$ as follows. For all $\alpha\in \pi \setminus
\pi^{\prime}$, if $j(\alpha)\in\pi \setminus \pi^{\prime}$ set
$i(\alpha) = j(\alpha)$. If now $j(\alpha)\in \pi^{\prime}$ let
$r$ be the smallest integer such that $j(ij)^r(\alpha) \notin
\pi^{\prime}$ and set $i(\alpha) = j(ij)^r(\alpha)$. Let $\langle ij\rangle$
denote the group generated by the element $ij$ and $E$ the set of
$\langle ij\rangle$ -  orbits in $\pi$. Set $E_1 := \{\Gamma\in E\mid j\Gamma \neq
\Gamma\}$ and $E_2 : = \{\Gamma\in E\mid j\Gamma = \Gamma\}$. One
has $E = E_1\bigsqcup E_2$. Notice that an orbit $\Gamma$ in $E$
is also an $\langle i,\, j\rangle$ (the group generated by $i$ and $j$) - orbit if
and only if $\Gamma\in E_2$. The index of $\liep_{\pi', E}$ is equal to the number of the $\langle ij\rangle$ - orbits in $\pi$, that is $\ind \trp = \card E$ \cite[7.14]{J6}.

\subsubsection{}\label{truncatedCartan} Denote by $\ell$ the standard length function on the
Coxeter group $\langle i,\, j\rangle$ and let
$\{\varpi_{\alpha}\}_{\alpha\in\pi}$ be the set of fundamental weights
of $\mathfrak{g}$ (sometimes we write $\{\varpi_i\}_{\alpha_i\in \pi}$). For all $\Gamma\in E$ which does not lie
entirely in $\pi^{\prime}$, define $h_{\Gamma} =
\sum\limits_{s\in\langle i,\,j\rangle}
(-1)^{\ell(s)}\varphi^{-1}(\varpi_{s\alpha})$ for a choice of
$\alpha\in\Gamma$. For simplicity, we will write $h_{\Gamma} =
\sum\limits_{s\in\langle i,\, j\rangle}(-1)^{\ell(s)}\varpi_{s\alpha}$. Denote
by $E^{\prime}_1$ the subset of $E_1$ of orbits which meet
$\pi\setminus \pi^{\prime}$. Define
$\mathfrak{p}_{\pi^{\prime}}^{\prime}$ to be the derived
subalgebra of $\mathfrak{p}_{\pi^{\prime}}$,
$\mathfrak{p}_{\pi^{\prime}}^{\prime} =
[\mathfrak{p}_{\pi^{\prime}}, \mathfrak{p}_{\pi^{\prime}}]$. Set
$\mathfrak{h}^{\prime} =
\mathfrak{p}^{\prime}_{\pi^{\prime}}\bigcap \mathfrak{h}$ and
$\mathfrak{h}_E = \mathfrak{h}^{\prime} + \sum\limits_{\Gamma\in
E^{\prime}_1} k\,h_{\Gamma}$. It turns out that the latter
sum is direct. Then $\mathfrak{p}_{\pi^{\prime}, E}$ is the
algebra $\mathfrak{p}_{\pi^{\prime}, E} = \mathfrak{n} \oplus
\mathfrak{h}_E\oplus \mathfrak{n}^-_{\pi'}$.

\subsubsection{}\label{wpi =id}
When $j=\id$, which is true in all cases outside type $A_n,\,D_{2n+1}$ and
$E_6$, one has that $\langle ij\rangle =
\langle i, j\rangle =\langle i\rangle$, $E_1 =\emptyset$ and $E = E_2$. The involution $i$
extends on $\pi\setminus \pi'$ by the identity. It then follows that for $\Gamma\in E$ either $\Gamma\subset \pi\setminus
\pi'$ or $\Gamma\subset \pi'$. In the former case, $\Gamma$ is a singleton.
Moreover
$\mathfrak{h}_E = \mathfrak{h}^{\prime}$.

\subsubsection{}\label{hEmax}
For a truncated maximal parabolic $\liep_{\pi', E}$ associated to $\pi'=\pi\setminus\{\alpha_s\}$, one has always $i(\alpha_s)=\alpha_s$, even when
$j\neq \id$. Thus $E_1'=\emptyset$ and then $\h_E=\h'$.

\subsection{The Poisson centre of $\trp$}\label{bounds}
As we already said, the algebra $Sy(\liep_{\pi'})=Y(\trp)$ has been studied in \cite{FJ2}, \cite{FJ3}. More precisely, there have been found certain upper and lower bounds for $Sy(\liep_{\pi'})$; when these bounds coincide, the algebra $Sy(\liep_{\pi'})$ is polynomial in $\ind\, \liep_{\pi', E}$ generators. In this case, the weights and the degrees of a set of homogeneous generators have been computed explicitly. In this section, we will briefly describe the criterion of \cite{FJ2}, which implies the polynomiality of $Y(\trp)$. We first need some further background, namely the notion of the Kostant cascade.

\subsubsection{}\label{cascade}(See also \cite[2.2]{J1}).
Let $\mathfrak{g}$ be as in section \ref{standardnotation} and set
$\mathcal{D} = \{\lambda\in\mathfrak{h}^*_{\mathbb{R}}\mid
(\lambda, \alpha) \geq 0,\, \forall \alpha\in\pi\}$
where $\h^*_{\mathbb R}$ is the real subspace of $\h^*$ generated by
$\pi$. Given
$\lambda\in\mathcal{D}$, define $\Delta_{\lambda}^{\pm} =
\{\alpha\in \Delta^{\pm}\mid (\lambda,\, \alpha)  = 0\},\,
\Delta_{\lambda} = \Delta_{\lambda}^+\cup \Delta_{\lambda}^-$.
 Let $\Delta = \bigsqcup\limits_i
\Delta_{i}$ be a decomposition of $\Delta$ to irreducible root
systems. Let $\beta_i$ be the unique highest root in $\Delta_i$. Then
$\beta_i\in \mathcal{D}$ for all $i$, $(\Delta_i)_{\beta_i}$
is a root system and it decomposes into a union of irreducible root
systems $(\Delta_i)_{\beta_i} = \bigsqcup\limits_j \Delta_{ij}$ with $\beta_{ij}$ the highest root of $\Delta_{ij}$.
This procedure defines a subset $\mathcal{K}$ of
$\bbN^*\cup{\bbN^*}^2\cup\cdots$ and a maximal set $\beta_{\pi} =
\{\beta_K\}_{K\in \mathcal{K}}$ of strongly orthogonal positive roots. The
set $\mathcal{K}$ can be endowed with a partial order $\leq$ as
follows : for all $K, \, L\in \mathcal{K}$ we say that $K\leq L$
if and only if $L=K$ or $L=\{K, n_1, n_2, \dots, n_s\}$, with $n_i\in
\bbN^*$ for all $i,\, 1\leq i\leq s$ and $s\geq 1$. The set
$\mathcal{K}$ - called the Kostant cascade -  is
described in \cite[Table III]{J1} for every simple Lie algebra $\mathfrak{g}$.

The set $\beta_{\pi}$ of strongly orthogonal positive roots for a simple Lie algebra $\g$, which may be also found in \cite[Tables I and II]{J1}, is given for the reader's convenience in Table I where the indexation of the simple roots is as in \cite[Planches I-IX]{BOU}.

If the initial root system $\Delta$ is irreducible of type $A,\,C,\,E_6,\,F_4,\,G_2$, the subsystems are
also irreducible and hence the Kostant cascade $\mathcal K$ and the set $\beta_{\pi}$ are totally ordered.
Thus in these cases, the elements $\beta_K$, $K\in\mathcal K$, may be simply indexed by $\mathbb N$, and are denoted by
$\beta_i$, $1\le i\le \rm{card}({\mathcal K})$. We understand $\beta_i<\beta_j$ if and only if $i<j$.

In other types, the order $\le $ on $\mathcal K$ is not a total order.  For the elements $\beta_K$, $K\in\mathcal K$,
we use the notations $\beta_i$, $\beta_{i'}$,  for type $B_n$ or $D_{2n+1}$, or $\beta_i$, $\beta_{i'}$ and $\beta_{i''}$
for type $D_{2n}$, $E_7$ or $E_8$ with order relation $\beta_i<\beta_j$ if and only if $i<j$, and $\beta_i<\beta_{i'},\, \beta_{i''}$.



For any rational number $r\in \mathbb{Q}$ denote by $[r]$ the integer such that, $r-1 < [r]\leq r$.
\newpage
\begin{center}
\begin{tabular}{|l|l|}\hline
Type&The set $\beta_{\pi}$ of strongly orthogonal positive roots \\
\hline $A_n$&$\beta_i =
\alpha_i + \cdots + \alpha_{n+1-i}; 1\leq i\leq [(n+1)/2]\;{\rm with}\;\beta_{\frac{n+1}{2}}=\alpha_{\frac{n+1}{2}}\;{\rm if }\;n\;{\rm is\;odd}$\\
\hline
\multirow{2}{*}{$B_n$}&$\beta_i = \alpha_{2i-1} + 2\alpha_{2i}+\cdots +2\alpha_n;\,1\leq i\leq [(n+1)/2]\;{\rm with}\;\beta_{\frac{n+1}{2}}=\alpha_n\;{\rm if}\;n\;{\rm is\;odd}$\\
&$\beta_{i'}=\alpha_{2i'-1};\, 1\leq i'\leq [n/2]$\\ \hline
$C_n$&$\beta_i =2\alpha_i+\cdots+2\alpha_{n-1}+\alpha_n;\,1\leq
i\leq n,\;{\rm with}\;\beta_n=\alpha_n$\\ \hline
\multirow{3}{*}{$D_n$}&$\beta_i=\alpha_{2i-1} + 2\alpha_{2i}+\cdots +2\alpha_{n-2}+\alpha_{n-1}+\alpha_n ; 1\leq i\leq [n/2]-1$\\
&{$\beta_{i' }= \alpha_{2i'-1} ; 1\leq i'\leq [n/2]-1$}\\
&{\rm and}\; $\beta_{\frac{n-1}{2}}=\alpha_{n-2}+\alpha_{n-1}+\alpha_n\;{\rm if}\;n\;{\rm is\;odd},\; \beta_{\frac{n}{2}}=\alpha_{n}\,\;{\rm if}\;n\;{\rm is\; even}$\\
&$\beta_{(\frac{n-2}{2})''}=\alpha_{n-1}\;{\rm if}\;n\;{\rm is\;even}$\\
\hline
\multirow{4}{*}{$E_6$}&$\beta_1=\alpha_1+2\alpha_2+2\alpha_3+3\alpha_4+2\alpha_5+\alpha_6$\\
&$\beta_2=\alpha_1+\alpha_3+\alpha_4+\alpha_5+\alpha_6$\\
&$\beta_3=\alpha_3+\alpha_4+\alpha_5$\\
&$\beta_4=\alpha_4$\\ \hline
\multirow{5}{*}{$E_7$}&$\beta_1=2\alpha_1+2\alpha_2+3\alpha_3+4\alpha_4+3\alpha_5+2\alpha_6+\alpha_7$\\
&$\beta_2=\alpha_2+\alpha_3+2\alpha_4+2\alpha_5+2\alpha_6+\alpha_7$\\
&$\beta_3=\alpha_2+\alpha_3+2\alpha_4+\alpha_5$\\
&$\beta_4=\alpha_3$\\
&$\beta_{2'}=\alpha_7,\, \beta_{3'}=\alpha_5,\,\beta_{3''}=\alpha_2$\\ \hline
\multirow{6}{*}{$E_8$}&$\beta_1=2\alpha_1+3\alpha_2+4\alpha_3+6\alpha_4+5\alpha_5+4\alpha_6+3\alpha_7+2\alpha_8$\\
&$\beta_2=2\alpha_1+2\alpha_2+3\alpha_3+4\alpha_4+3\alpha_5+2\alpha_6+\alpha_7$\\
&$\beta_3=\alpha_2+\alpha_3+2\alpha_4+2\alpha_5+2\alpha_6+\alpha_7$\\
&$\beta_4=\alpha_2+\alpha_3+2\alpha_4+\alpha_5$\\
&$\beta_5=\alpha_3$\\
&$\beta_{3'}=\alpha_7,\, \beta_{4'}=\alpha_5,\,
\beta_{4''}=\alpha_2$\\ \hline
\multirow{4}{*}{$F_4$}&$\beta_1 =
2\alpha_1+3\alpha_2+4\alpha_3+2\alpha_4$\\
&$\beta_2=\alpha_2+2\alpha_3+2\alpha_4$\\
&$\beta_3=\alpha_2+2\alpha_3$\\
&$\beta_4=\alpha_2$\\ \hline
\multirow{2}{*}{$G_2$}&$\beta_1 = 3\alpha_1+2\alpha_2$\\
&$\beta_2 = \alpha_1$\\ \hline
\end{tabular}\\
\footnotesize{Table I}
\end{center}

\subsubsection{}\label{heisenbergconditions} We will slightly digress in order to introduce the sets $H_{\beta_K},\, K\in \mathcal K$ and some of their properties that will be useful in the sequel. For $A\subset\Delta$, denote by $-A=\{\alpha\in\Delta\mid-\alpha\in A\}$.

For all $K\in\mathcal K$ set $H_{\beta_K}= \{\gamma\in\Delta_K \mid (\gamma,\,
\beta_K)> 0\}$ and $H_{-\beta_K}=-H_{\beta_K} $. By \cite[Lemma 2.2]{J1}, we have the following :
\begin{enumerate}
\item \label{pos} $H_{\beta_K}=\Delta^+_K\setminus{(\Delta^+_K)}_{\beta_K}$.
\item \label{deco} $\Delta^+ = \bigsqcup\limits_{L\in \mathcal{K}}H_{\beta_L}$ and $\Delta^- = \bigsqcup\limits_{L\in
\mathcal{K}} H_{-\beta_L}.$
\item \label{h} If $\alpha\in H_{\beta_K}\setminus\{\beta_K\}$, then $\beta_K-\alpha\in H_{\beta_K}\setminus\{\beta_K\}.$
\item \label{adclosed} Given $\alpha,\,\alpha'\in H_{\beta_K}$, if $\alpha+\alpha'\in\Delta$, then $\alpha+\alpha'=\beta_K.$
\item \label{prop2} If $\alpha\in H_{\beta_K},\,\alpha'\in H_{\beta_L}$ are such that $\alpha+\alpha'\in \Delta$, then $K\le L$ (resp. $L\le K$) and $\alpha+\alpha'\in H_{\beta_K}$ (resp. $\alpha+\alpha'\in H_{\beta_L}$). In particular, if $\alpha+\alpha'=\beta_K$, then $K=L$ and $\alpha,\, \alpha'\in H_{\beta_K}\setminus \{\beta_K\}$.
\item \label{Hpi} $|H_{\beta_K}\cap \pi|=1$ if $\Delta_K$ is not of type $A$ and $2$ if it is. In fact, \[H_{\beta_i}\cap \pi=\left\{\begin{array}{ll}\{\alpha_i,\, \alpha_{n+1-i}\}, &\textrm{in type A}.\\
\{\alpha_{2i}\}, &\textrm{in type BD}.\\
\{\alpha_i\}, &\textrm{in type C}.\end{array}\right.\]
\end{enumerate}

\subsubsection{}\label{borel} Recall that $\{\varpi_{\alpha}\}_{\alpha\in \pi}$ is
the set of fundamental weights of $\mathfrak{g}$ and $j = -w_0$ is the Dynkin diagram involution.
Let $P^+(\pi):=\bigoplus\limits_{\alpha\in \pi}\mathbb N \varpi_\alpha$ denote the set of dominant weights of $\lieg$ and set $\mathcal B_\pi:=P^+(\pi)\cap \mathbb N\beta_\pi$. Define $P^+(\pi'),\, \mathcal B_{\pi'}$ accordingly. Then $\mathcal B_{\pi}$ is the set of weights of the Poisson semicentre $Sy(\b)$, where $\b=\n\oplus\h$ is the Borel subalgebra.
In this case, $Sy(\b)$ is a polynomial algebra in $\rk\,\g$ generators \cite{J1}. The generators of the free semigroup $\mathcal B_{\pi}$ may be found in \cite[Tables I and II]{J1} and in \cite[Table]{FJ2}.

Set for all $\Gamma\in E$ :
$$\varepsilon_{\Gamma} =
\left\{
\begin{array}{lr} 1/2, \;\;\;\textrm{if }\;\;\Gamma=j\Gamma,\; \sum\limits_{\alpha\in\Gamma}\varpi_{\alpha}\in \mathcal B_\pi\,,\; \sum\limits_{\alpha\in\Gamma\cap \pi'}\varpi_{\alpha}'\in \mathcal B_{\pi'},\\
1,\;\;\; \textrm{otherwise}.
\end{array}
\right.
$$
\\

\begin{theorem} \cite[Thm. 7.3]{FJ2}. If $\varepsilon_\Gamma=1$ for all $\Gamma\in E$, then the algebra $Y(\trp)$ is polynomial in $|E|=\ind\,\trp$ generators. In particular, when $\g$ is simple of type $A$ or $C$, then $\varepsilon_{\Gamma}=1$, for all $\Gamma\in E$.
\end{theorem}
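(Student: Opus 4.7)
The plan is to sandwich $Y(\trp) = Sy(\liep_{\pi'})$ between explicit upper and lower bounds and to recognize the hypothesis $\varepsilon_\Gamma = 1$ as exactly what forces the two bounds to agree. Throughout, I would exploit the bi-gradation of $S(\liep_{\pi'})$ by polynomial degree and by $\ad\,\h$-weight: since $\h\subset \liep_{\pi'}$ acts semisimply, every homogeneous semi-invariant $f$ carries a well-defined weight $\mu(f)\in\h^*$, and the set $\Lambda$ of weights of nonzero homogeneous semi-invariants forms a finitely generated sub-semigroup of $\h^*$.

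For the upper bound, I would follow the general strategy of \cite{FJ3}: embed $Sy(\liep_{\pi'})$ into $Sy(\b)$ via an algebraic restriction, so that $\Lambda$ sits inside the weight monoid $\mathcal B_\pi$ of the Borel, and impose the additional constraint coming from the Levi part through $\mathcal B_{\pi'}$. A careful accounting using the $\langle ij\rangle$-action on $\pi$ (and the extension of $i$ to $\pi\setminus\pi'$ described in \ref{Orbits}) shows that any minimal generating set for $\Lambda$ is indexed by the orbits $\Gamma\in E$, yielding at most $|E|$ generators whose weights and prescribed degrees $n_\Gamma$ are read off from $\varepsilon_\Gamma$ and the data in Table I.

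For the lower bound, for each $\Gamma\in E$ I would construct an explicit semi-invariant $f_\Gamma\in Sy(\liep_{\pi'})$ realising the weight and degree just predicted. The building blocks are the generators of $Sy(\b)$, that is the Borel semi-invariants with weights in $\mathcal B_\pi$ from \cite{J1}, which one combines along the orbit $\Gamma$ using the involutions $i$ and $j$ to produce a $\liep_{\pi'}$-semi-invariant. When $\varepsilon_\Gamma = 1$, this procedure lands in $S(\liep_{\pi'})$ at exactly the degree $n_\Gamma$ dictated by the upper bound, and algebraic independence of the family $(f_\Gamma)_{\Gamma\in E}$ follows from the linear independence of their weights in $\h^*$. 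Matching the bounds then forces $Y(\trp)=k[f_\Gamma : \Gamma\in E]$ to be polynomial in $|E|=\ind\,\trp$ generators.

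For the second assertion, it suffices to verify that in types $A_n$ and $C_n$ no orbit can satisfy all three conditions defining $\varepsilon_\Gamma = 1/2$ simultaneously. In type $C_n$ one has $j=\id$, so $\Gamma = j\Gamma$ holds automatically; inspection of Table I together with the description of $\mathcal B_\pi$ from \cite{FJ2} shows however that the condition $\sum_{\alpha\in\Gamma}\varpi_\alpha\in\mathcal B_\pi$ forces $\Gamma$ into a very restricted shape that never survives the additional Levi condition on $\mathcal B_{\pi'}$. A similar diagrammatic check, now using that $j$ is the opposition involution on $\pi$, handles type $A_n$. The main obstacle in the whole programme is the degree part of the lower-bound construction: producing a semi-invariant of the prescribed weight from Borel data is straightforward, but guaranteeing that its degree equals $n_\Gamma$ rather than $2n_\Gamma$ is the delicate point, since when $\varepsilon_\Gamma = 1/2$ the natural candidate is only a square root of a Borel semi-invariant and may fail to descend to a polynomial function on $\liep_{\pi'}^*$.
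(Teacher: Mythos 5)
The paper does not actually prove this statement: it is imported verbatim from \cite{FJ2} (Thm.~7.3), and the only internal commentary is Remark~(1) following it, which records that the hypothesis $\varepsilon_\Gamma=1$ for all $\Gamma\in E$ is precisely the condition under which the upper and lower bounds for $Sy(\p_{\pi'})$ of \cite{FJ2,FJ3} coincide. Measured against that, your outline identifies the correct mechanism and the same one the paper alludes to: an upper bound on the weight semigroup and degrees of $Sy(\p_{\pi'})$ expressed through $\mathcal B_\pi$, $\mathcal B_{\pi'}$ and the orbits in $E$; a lower bound furnished by explicitly constructed semi-invariants $f_\Gamma$; and polynomiality on $|E|=\ind\trp$ generators obtained by forcing the two to agree. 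Your closing observation that the factor $1/2$ in $\varepsilon_\Gamma$ reflects a potential ``square root'' of a Borel semi-invariant that the lower-bound construction cannot supply is exactly the right diagnosis of where the bounds can fail to meet.

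That said, what you have written is a plan rather than a proof, and the gaps are precisely the substantive content of \cite{FJ2}. Concretely: (a) you never specify the weight $\delta_\Gamma$ and degree attached to an orbit $\Gamma$, so ``the bounds agree when $\varepsilon_\Gamma=1$'' has no verifiable content yet; (b) the lower bound is the hard part --- one must show that the candidate built from Borel semi-invariants (whose generators and weights come from \cite{J1}) actually descends to a genuine element of $S(\p_{\pi'})$ of the predicted degree, and this is asserted, not done; (c) ``matching the bounds forces polynomiality'' requires the formal-character or Poincar\'e-series comparison between two polynomial algebras sandwiching $Sy(\p_{\pi'})$, which you do not set up; (d) the type $A$ and $C$ claim is reduced to an ``inspection'' of Table~I and $\mathcal B_{\pi'}$ that is not carried out (in type $C$ it does follow quickly from $\mathcal B_\pi=\sum_i\mathbb N\,2\varpi_i$, but type $A$ genuinely needs the Levi condition and is not immediate). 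One further imprecision: the upper bound in \cite{FJ2} is not a literal embedding of $Sy(\p_{\pi'})$ into $Sy(\mathfrak b)$ but an inclusion up to a double gradation respecting degrees, as the paper itself notes in~1.7. Within this paper the statement is, and should remain, a citation; as a standalone proof your proposal would need all of (a)--(d) filled in.
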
\\

\noindent {\bf Remarks.}
\begin{enumerate}
\item The bounds for $Sy(\p_{\pi'})$ that we mentioned in the beginning of \ref{bounds} coincide precisely when $\varepsilon_\Gamma=1$ for all $\Gamma\in E$.
\item Examples where the bounds do not coincide but $Y(\p_{\pi',\,E})$ is still polynomial are known (see for instance \cite{J1} for the Borel case or \cite{J6bis} for the centralizer of the highest root vector, which is a particular truncated parabolic algebra).
\item In \cite{Y} there has been found an example where $Y(\trp)$ is not polynomial. This is the case of the maximal parabolic subalgebra of $\lieg$ of type $E_8$ with $\pi'=\pi\setminus \{\alpha_8\}$ (in the Bourbaki notation \cite[Planche VII]{BOU}). In this case, $\trp=\mathfrak g^{x_{\beta}}$, where $\beta$ is the unique highest root of $\mathfrak g$ and $x_\beta\in \mathfrak g_\beta\setminus \{0\}$.
\end{enumerate}

\section{The construction of an adapted pair}
Recall the definition of an adapted pair of Section \ref{adaptedpair1}. In \cite{J5}, an adapted
pair is constructed for all truncated (bi)parabolic subalgebras of
$\liesl_n$. In the following sections we will construct an adapted
pair - whenever possible - for all maximal truncated parabolic subalgebras in the remaining types when the bounds of \cite{FJ2} coincide.
By a maximal parabolic we mean that $\pi' = \pi\setminus
\{\alpha_s\}$.\\

\noindent {\bf Remark.} The reason that we restrict ourselves to the maximal parabolic case unlike the case of $\liesl_n$ where an adapted pair has been constructed for all parabolic subalgebras is that for $\liesl_n$ the truncated Cartan subalgebra is never too small. By contrast, when $j = \id$ and so $\lieh_E = \lieh'$, the truncated Cartan subalgebra may be very small or even $\{0\}$ in the case of the truncated Borel.
We will see in Section \ref{F}, that even for maximal truncated parabolics with Poisson centre polynomial, adapted pairs may not exist.\\

It is convenient from now on to replace $\trp$ with its
opposed algebra $\trp^-$. Identify $(\trp^-)^*$ with $\trp$ through the
Killing form. Then $\trp^-$ acts on $\trp$ via
the coadjoint action. If $\mathfrak{h}^{\perp}_E$ is the
orthogonal complement of $\mathfrak{h}_E$ in $\mathfrak{h}$ with
respect to the Killing form, then we can identify
$(\mathfrak{p}_{\pi',E}^-)^*$ with
$\mathfrak{g}/(\mathfrak{h}^{\perp}_E\oplus \mathfrak{m}^-)$,
where $\mathfrak{m}^-$ is the nilradical of $\mathfrak{p}^-_{\pi',E}$ and
then the coadjoint action is computed through commutation in
$\mathfrak{g} \mod \mathfrak{h}^{\perp}_E\oplus \mathfrak{m}^-$.

\subsection{Heisenberg sets}\label{heisenberg}
For $A\subset\Delta$, set $\g_A:=\bigoplus\limits_{\alpha\in A}\g_{\alpha}$, which is a subspace of $\g$ and a Lie subalgebra of $\g$ if $A$ is additively closed in $\Delta$.\\

\noindent {\bf Definition.} For any $\gamma\in \Delta$, a subset $\Gamma_\gamma$ of $\Delta$ is called a {\it Heisenberg set of centre $\gamma$} if $\gamma\in\Gamma_{\gamma}$ and for all $\alpha\in \Gamma_\gamma\setminus\{\gamma\}$, there exists a unique $\alpha'\in \Gamma_\gamma$ such that $\alpha+\alpha'=\gamma$.\\

\noindent {\bf Remark.}
For a fixed $\gamma$, there exist several Heisenberg sets $\Gamma_{\gamma}$ of centre $\gamma$. All of them are included in a maximal one, namely the set of all decompositions of $\gamma$ into the sum of two roots in $\Delta$. (Note that an $\langle ij\rangle$-orbit in $\pi$ was also denoted by $\Gamma$; we will be very careful not to have any confusion).\\

\noindent {\bf Example.} Recall the sets $H_{\beta_K},\, K\in \mathcal K$ defined in \ref{heisenbergconditions}. One has that $H_{\beta_K}$ is a Heisenberg set of centre $\beta_K$ by \ref{heisenbergconditions} (3) and actually it is the maximal Heisenberg set of centre $\beta_K$, which is included in $\Delta^+$. Moreover, by \ref{heisenbergconditions} (4), $\g_{H_{\beta_K}}$ is a Heisenberg Lie algebra of centre $\g_{\beta_K}$. For an arbitrary $\Gamma_\gamma$ it is no longer true that $\g_{\Gamma_\gamma}$ is a Heisenberg Lie algebra or even a Lie subalgebra of $\g$; however, inspired by this example, we called the sets $\Gamma_\gamma$ Heisenberg sets.\\

If $\Gamma_\gamma$ is a Heisenberg set of centre $\gamma$, then $-\Gamma_{\gamma}=\Gamma_{-\gamma}$ is a Heisenberg set of centre $-\gamma$. We set $\Gamma^0_{\gamma}:=\Gamma_{\gamma}\setminus\{\gamma\}$. Similarly, we define $H_{\beta_K}^0$.

\subsection{} The following lemma can be found in \cite[Lemma 8.5]{J5}:\\

\begin{lemma}\label{nondegenerate} Let $\Gamma^\pm$ be subsets of $\Delta^\pm$ and
suppose there exist subsets $S^\pm$ of $\Gamma^\pm$ such that $\Gamma^\pm=
\coprod\limits_{\gamma\in S^\pm} \Gamma_{\gamma}$, where $\Gamma_{\gamma}$ is a Heisenberg set of centre $\gamma$, for all $\gamma\in S^\pm$. Set $S=S^+\sqcup S^-$ and $O^\pm=\coprod\limits_{\gamma\in S^\pm} \Gamma_{\gamma}^0$. Set also $\o= \bigoplus\limits_{\gamma\in
S}\lieg_{-\Gamma_{\gamma}^0}=\g_{-O^+}\oplus \g_{-O^-}$.

Assume further that the elements of $S$ are linearly independent and that if
$\alpha,\,\beta\in O^\pm$ are such that
$\alpha+\beta= \gamma$, for some $\gamma\in S^\pm$, then
$\alpha,\,\beta\in\Gamma_{\gamma}^0$. Set \[f = \sum\limits_{\gamma\in S}a_{\gamma}x_{\gamma},\]
where $a_{\gamma}$ are non-zero scalars for all $\gamma\in S$. Then the
bilinear form $\Phi_f : \lieg \times \lieg \rightarrow k$
defined by $\Phi_f(x,\,x') = K(f,\,[x, \,x'])$ for all $x,\, x' \in\lieg$, where $K$ is the Killing form on $\g$, is
non-degenerate on $\o \times
\o$.
\end{lemma}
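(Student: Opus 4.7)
The plan is to show $\Phi_f$ is non-degenerate by computing its matrix in the root-vector basis of $\mathfrak{o}$, organized according to the partition $O^+ \sqcup O^- = \coprod_{\gamma \in S} \Gamma_\gamma^0$, and exhibiting a block-triangular structure with invertible diagonal blocks.

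First I compute the entries. For $\alpha, \beta \in O^+ \cup O^-$, using $f = \sum_{\gamma \in S} a_\gamma x_\gamma$,
\[
\Phi_f(x_{-\alpha}, x_{-\beta}) = \sum_{\gamma \in S} a_\gamma K(x_\gamma, [x_{-\alpha}, x_{-\beta}]).
\]
Since $[x_{-\alpha}, x_{-\beta}]$ lies in $\mathfrak{g}_{-(\alpha+\beta)}$ (or is zero, when $-(\alpha+\beta)\notin\Delta$) and the Killing form pairing $K(x_\gamma, x_{-\mu})$ is nonzero iff $\gamma = \mu$, this entry is nonzero precisely when $\alpha + \beta \in S$, in which case it equals a nonzero scalar multiple of $a_{\alpha+\beta}$. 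The matrix is thus supported on pairs summing into $S$.

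Next I analyze the diagonal block on $V_\gamma := \mathfrak{g}_{-\Gamma_\gamma^0}$. The Heisenberg property gives a fixed-point-free involution $\alpha \mapsto \alpha^*$ of $\Gamma_\gamma^0$ defined by $\alpha + \alpha^* = \gamma$ (fixed-point-free since no root $\alpha$ can satisfy $2\alpha = \gamma$). Combined with the antisymmetry of $\Phi_f$, the block becomes a direct sum of $2 \times 2$ invertible anti-symmetric matrices with nonzero entries proportional to $a_\gamma$, so each diagonal block is invertible.

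For off-diagonal blocks between distinct $\gamma_1, \gamma_2 \in S$, suppose $\alpha \in \Gamma_{\gamma_1}^0$ and $\beta \in \Gamma_{\gamma_2}^0$ produce a nonzero entry, so $\alpha + \beta \in S$. If $\alpha, \beta$ both lie in $O^+$ (resp.\ both in $O^-$), the lemma's hypothesis forces $\alpha, \beta \in \Gamma_{\alpha+\beta}^0$; the disjointness of $\Gamma^\pm = \coprod \Gamma_\delta$ then gives $\gamma_1 = \alpha + \beta = \gamma_2$, contradicting $\gamma_1 \neq \gamma_2$. Hence all same-sign off-diagonal contributions vanish. In the mixed case ($\alpha \in O^+, \beta \in O^-$), I would rewrite $\alpha = \gamma_1 - \alpha^*$ and $\beta = \gamma_2 - \beta^*$ via the respective Heisenberg involutions to obtain $\gamma_1 + \gamma_2 - (\alpha+\beta) = \alpha^* + \beta^*$, and then use the linear independence of $S$ together with a total order on $S$ (for instance, by height) to show that such mixed entries produce only a strictly triangular contribution in the block structure. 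The full matrix is then block-triangular with invertible diagonal blocks, which gives non-degeneracy.

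The main obstacle is the mixed-sign off-diagonal case, since the stated hypothesis explicitly controls only same-sign sums landing in $S$. Establishing the desired triangularity there requires leveraging the linear independence of the elements of $S$ together with a careful ordering and the Heisenberg identities, while all same-sign arguments follow cleanly from the hypothesis and the combinatorics of the partition.
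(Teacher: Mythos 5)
There is a genuine gap, and it sits exactly where you flagged it. The parts you do prove are correct: the entry $\Phi_f(x_{-\alpha},x_{-\beta})$ is a nonzero multiple of $a_{\alpha+\beta}$ when $\alpha+\beta\in S$ and zero otherwise; each diagonal block $\Phi_f|_{\mathfrak g_{-\Gamma^0_\gamma}\times\mathfrak g_{-\Gamma^0_\gamma}}$ is invertible via the fixed-point-free involution $\alpha\mapsto\gamma-\alpha$; and the same-sign off-diagonal blocks vanish by the hypothesis together with the disjointness of the $\Gamma_\gamma$. But this only establishes that the two restrictions of $\Phi_f$ to $\mathfrak g_{-O^+}$ and to $\mathfrak g_{-O^-}$ are non-degenerate, which does not imply non-degeneracy on all of $\mathfrak o$: an alternating matrix $\bigl(\begin{smallmatrix}A&B\\-B^{T}&C\end{smallmatrix}\bigr)$ with $A,C$ invertible can be singular. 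Moreover, the completion you propose cannot work in the form stated. Since $\Phi_f$ is alternating, a matrix that is block-triangular with respect to the decomposition $\mathfrak o=\bigoplus_{\gamma\in S}\mathfrak g_{-\Gamma_\gamma^0}$ and has invertible diagonal blocks is automatically block-\emph{diagonal}; your plan would therefore force all mixed blocks to vanish, and they do not. For instance, in the paper's type $C_4$ construction with $\pi'=\pi\setminus\{\alpha_3\}$ one has $\gamma_1=\alpha_1+2\alpha_2+2\alpha_3+\alpha_4,\ \gamma_3=\alpha_3+\alpha_4\in S^+$ and $-\beta'_1=-(\alpha_1+\alpha_2)\in S^-$; then $\alpha=\alpha_2+\alpha_3+\alpha_4\in H_{\beta_2}\subset\Gamma^0_{\gamma_1}\subset O^+$ and $\beta=-\alpha_2\in\Gamma^0_{-\beta'_1}\subset O^-$ satisfy $\alpha+\beta=\gamma_3\in S$, giving a nonzero entry that links two distinct blocks through a third element of $S$. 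So the mixed-sign case is the actual content of the lemma, and the height-ordering idea as described does not dispose of it.

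Note also that the paper offers no proof to compare against: the statement is quoted from \cite[Lemma 8.5]{J5} and used as a black box. To close the gap you would need to reproduce the argument of that reference, which controls the interaction between $O^+$ and $O^-$ by a finer ordering/filtration of the individual root vectors (an anti-triangular normal form compatible with antisymmetry, in which the linear independence of $S$ genuinely intervenes), not by block-triangularity over the blocks indexed by $S$.
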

\smallskip

When $S^+=\beta_{\pi^+}$, $S^-=-\beta_{\pi^-}$ for two subsets $\pi^+,\,\pi^-\subset \pi$ and $\Gamma_\gamma=H_\gamma$ for all $\gamma\in S$, the above result follows from \cite[Remarque 3.9]{TY}.

\subsection{}

\begin{lemma}\label{regulargeneral} Suppose $\pi'\subset \pi$ is a
subset of $\pi$ and let $\liep_{\pi', E}$ be the corresponding
truncated parabolic subalgebra. Let $S^+, \,T^+\subset \Delta^+$ and
$S^-, \,T^-\subset \Delta^-_{\pi'}$ be such that the following conditions hold :
\begin{enumerate}
\item For all $\gamma\in S^\pm$ there exist Heisenberg sets $\Gamma_{\gamma}$ of centre $\gamma$ such that, \[\Gamma^+ : = \Delta^+\setminus T^+=\bigsqcup\limits_{\gamma\in
S^+}\Gamma_{\gamma},\] and \[\Gamma^- : = \Delta^-_{\pi'}\setminus
T^- = \bigsqcup\limits_{\gamma\in S^-}\Gamma_{\gamma}.\]
Set $O^\pm=\bigsqcup\limits_{\gamma\in
S^\pm}\Gamma_{\gamma}^0$.
\item If  $\alpha, \beta\in O^\pm$ are such that
$\alpha + \beta = \gamma$ for some $\gamma\in S^\pm$ then $\alpha,
\beta\in\Gamma_{\gamma}^0$.
\item Set $S=S^+\sqcup S^-$. Then $S|_{\lieh_E}$ is a basis of $\lieh_E^*$.
\item Set $T=T^+\sqcup T^-$. Then $|T|=\ind \trp$.
\end{enumerate}
Then the element $y:=\sum\limits_{\gamma\in S}x_{\gamma}$ is regular and $\mathfrak g_T:=\bigoplus\limits_{\alpha\in T}\g_{\alpha}$ is a complement of the $\ad\, \trp^-$ - orbit of $y$ in $\trp$, that is
$$(\ad \liep^-_{\pi', E})\,y \oplus
\lieg_T = \liep_{\pi', E}.$$
\end{lemma}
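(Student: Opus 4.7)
My plan is to dualise. Under the Killing identification $(\p^-_{\pi',E})^* \cong \g/(\h_E^\perp \oplus \mathfrak{m}^-) \cong \p_{\pi',E}$, the decomposition $(\ad\p^-_{\pi',E})y \oplus \g_T = \p_{\pi',E}$ will follow once I prove the surjectivity $(\ad\p^-_{\pi',E})y + \g_T = \p_{\pi',E}$: combined with hypothesis (4), the bracketing
\[
\mathrm{codim}_{\p_{\pi',E}}(\ad\p^-_{\pi',E})y \;\geq\; \ind\p_{\pi',E} \;=\; |T| \;=\; \dim\g_T \;\geq\; \mathrm{codim}_{\p_{\pi',E}}(\ad\p^-_{\pi',E})y
\]
forces equality everywhere, so $y$ is regular and the sum is direct automatically. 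Surjectivity, in turn, is equivalent to the vanishing of the annihilator of $(\ad\p^-_{\pi',E})y + \g_T$ in $\p^-_{\pi',E}$ under the Killing pairing. A direct root-space bookkeeping, using the Heisenberg decompositions in (1), identifies the annihilator of $\g_T$ inside $\p^-_{\pi',E}$ as $\mathfrak{q} := \g_{-S} \oplus \g_{-O} \oplus \h_E$, and the annihilator of the orbit is the stabiliser $\mathrm{Stab}(y)=\{u\in\p^-_{\pi',E} : \Phi_y(u,\p^-_{\pi',E})=0\}$ with $\Phi_y(x,x'):=K(y,[x,x'])$.

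Thus the real content is to show $\mathfrak{q}\cap \mathrm{Stab}(y) = \{0\}$. I would take $u\in\mathfrak{q}\cap\mathrm{Stab}(y)$, decompose $u = u_S + u_O + u_h$ along the three summands of $\mathfrak{q}$, and eliminate the components in the order $u_S$, $u_O$, $u_h$. The disjointness of the $\Gamma_\gamma$ gives $S\cap O = \emptyset$, so in every Killing pairing $K(y,[u,v])$ the cross-terms between $\g_{-S}$ and $\g_{-O}$ vanish on weight grounds. Pairing $u$ with $v\in\h_E\subset\p^-_{\pi',E}$ isolates the $u_S$-contribution as a linear combination of the restrictions $\gamma|_{\h_E}$, $\gamma\in S$; by (3) these are a basis of $\h_E^*$, so $u_S=0$. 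Pairing the leftover $u_O+u_h$ with $v\in\g_{-O}$ drops the $\h_E$-piece by the same disjointness and reduces to $\Phi_y(u_O,\g_{-O})=0$; Lemma \ref{nondegenerate}, applicable because its hypotheses reduce verbatim to our (1), (2) and the linear independence of $S$ supplied by (3), then yields $u_O = 0$. Finally pairing $u_h$ with $v = x_{-\gamma}$, $\gamma\in S$ (noting $-S\subset \Delta^-\cup \Delta^+_{\pi'}\subset \p^-_{\pi',E}$), gives $\gamma(u_h)=0$ for every $\gamma\in S$, and (3) once more forces $u_h = 0$.

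The main obstacle is the middle step: I need Lemma \ref{nondegenerate} to apply in the present setting with $f=y$ and all $a_\gamma=1$. Conditions (1) and (2) here are precisely the hypotheses of that lemma, and the linear independence of $S$ (as elements of $\h^*$) is immediate once $S|_{\h_E}$ is a basis of $\h_E^*$. The remainder is routine bookkeeping in the Killing pairing, controlled entirely by the identity $K(\g_\alpha,\g_\beta)=0$ unless $\alpha+\beta=0$ and by the disjointness of $S$ and $O$.
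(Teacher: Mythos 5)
Your proposal is correct and is essentially the paper's argument in dualized form: the paper establishes the same surjectivity $(\ad \liep^-_{\pi',E})\,y + \g_T = \trp$ directly, by showing each summand of $\trp=\h_E\oplus\g_{O}\oplus\g_S\oplus\g_T$ lies in the right-hand side (nondegeneracy of $\Phi_y$ on $\o\times\o$ from Lemma \ref{nondegenerate} handling $\g_{O}$, and condition (3) used twice, for $\g_S=(\ad\h_E)\,y$ and for $\h_E\subset(\ad\g_{-S})\,y+\cdots$), whereas you verify the equivalent statement that the annihilator of $(\ad \liep^-_{\pi',E})\,y + \g_T$ in $\trp^-$ vanishes, using exactly the same three ingredients. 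The concluding step --- condition (4) forcing directness and regularity via $\dim\g_T=\ind\trp\le\operatorname{codim}\,(\ad \liep^-_{\pi',E})\,y$ --- is identical to the paper's.
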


\begin{proof}
The proof is similar to that of \cite[Theorem 8.6]{J5}. We will give it for completeness.
Retain the notations of Lemma \ref{nondegenerate}. Note that $\g_{O^+}\oplus \g_{O^-}$ identifies with $\o^*$ via the Killing isomorphism.
Condition (1) implies the equalities $\p_{\pi',\,E}=\h_E\oplus\o^*\oplus\g_S\oplus\g_T$ and $\p_{\pi',\,E}^-=\h_E\oplus\o\oplus\g_{-S}\oplus\g_{-T}$.
Conditions (2) and (3) and the expansion of $y$ imply by Lemma~\ref{nondegenerate} that $\Phi_y$ is non-degenerate on
$\o\times\o$. Then $\o^*\subset(\ad\,\o)\,y\subset(\ad\,\p_{\pi',\,E}^-)\,y$. 
Condition (3) implies that $\g_S=(\ad\,\h_E)\,y$ and that $\h_E=(\ad\,\g_{-S})\,y +o^*\oplus\g_S\oplus\g_T$.
Hence $\p_{\pi',\,E}=\h_E\oplus\o^*\oplus\g_S\oplus\g_T\subset(\ad\,\p_{\pi',\,E}^-)\,y+\g_T$. Finally, condition (4) forces the latter sum to be direct, since $\dim\g_T=\ind \trp\le \operatorname{codim} (\ad \liep^-_{\pi', E})\,y$.
\end{proof}

\subsection{}

The corollary below follows from Theorem~\ref{adaptedpair1} and Lemma~\ref{regulargeneral} :\\

\begin{corollary}\label{sliceheisenberg} Suppose that the hypotheses of the previous lemma hold for $\trp$ and define $h\in \lieh_E$ by setting $\alpha(h)=-1$ for all $\alpha\in S$. Then $(h,\,y)$ is an adapted pair for $\trp$. If, in addition, $Y(\trp)$ is polynomial in $\ind \trp$ generators, then $y+\g_T$ is a slice for the coadjoint action of $\trp^-$.
\end{corollary}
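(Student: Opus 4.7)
The plan is to assemble three ingredients already in place: Lemma \ref{regulargeneral} does the bulk of the work, Theorem \ref{adaptedpair1} converts the adapted pair into a Weierstrass section, and \cite[Lemma 3.2]{FJ4} promotes the Weierstrass section to a slice for the coadjoint action.

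First I would verify the three conditions in the definition of an adapted pair. The element $h$ belongs to $\h_E\subset\h$, hence is $\ad$-semisimple; it is well defined because condition (3) of Lemma \ref{regulargeneral} ensures that $\{\alpha|_{\h_E}\mid \alpha\in S\}$ is a basis of $\h_E^*$, so the system $\alpha(h)=-1$, $\alpha\in S$, has a unique solution in $\h_E$. Regularity of $y$ is exactly the conclusion of Lemma \ref{regulargeneral}. For the eigenvalue condition, the Killing-form identification of $(\trp^-)^*$ with $\g/(\h_E^\perp\oplus\mathfrak{m}^-)$ turns the coadjoint action of $h$ into bracketing in $\g$ followed by the natural projection; since each $x_\gamma$ with $\gamma\in S\subset\Delta^+\sqcup\Delta^-_{\pi'}$ already lies inside $\trp$ and has $\h$-weight $\gamma$, one computes $[h,x_\gamma]=\gamma(h)x_\gamma=-x_\gamma$ in $\g$, hence $(\ad h)\,y=-y$ in $(\trp^-)^*$.

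Next I would invoke Theorem \ref{adaptedpair1}. As recalled in \ref{adaptedpair1}, the truncated parabolic $\trp^-$ is unimodular with fundamental semi-invariant an invariant, so the reformulated Slice Theorem applies. Lemma \ref{regulargeneral} provides the complement $\g_T$ of $(\ad\trp^-)\,y$ in $\trp=(\trp^-)^*$, and this complement is $\ad h$-stable since each summand $\g_\alpha$, $\alpha\in T$, is an $h$-weight space. Under the polynomiality hypothesis on $Y(\trp)$ in $\ind\trp=|T|$ generators, part (1) of Theorem \ref{adaptedpair1} immediately gives that the restriction map $Y(\trp)\to R[y+\g_T]$ is an isomorphism, so $y+\g_T$ is a Weierstrass section. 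To upgrade it to an honest slice one invokes \cite[Lemma 3.2]{FJ4}, whose hypothesis that $S(\trp^-)$ has no proper semi-invariants is the property of truncated parabolics already recalled in \ref{adaptedpair1} via \cite[lemme 2.5]{FJ3}.

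Since every step is a direct citation of an earlier result in the paper, I expect no genuine obstacle. The only mild care required is the sign and identification bookkeeping when translating $\ad h$ on $\g$ to the coadjoint action on the chosen realization of $(\trp^-)^*$; this reduces to the observation that for $h\in\h$ and $\gamma\in\Delta^+\sqcup\Delta^-_{\pi'}$ the bracket $[h,x_\gamma]$ stays inside $\trp$, so no projection correction appears.
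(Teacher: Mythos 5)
Your proposal is correct and matches the paper's intent: the paper gives no written proof beyond the remark that the corollary ``follows from Theorem~\ref{adaptedpair1} and Lemma~\ref{regulargeneral}'', and your argument fills in exactly the intended details (well-definedness of $h$ from condition (3), regularity of $y$ from the lemma, the weight computation for $(\operatorname{ad}h)\,y=-y$, then the reformulated Slice Theorem plus \cite[Lemma 3.2]{FJ4}). No gaps.
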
\\


\subsubsection{}\label{STcascade} In the following sections we construct an adapted pair for maximal parabolic subalgebras for which $Y(\trp)$ is known to be polynomial, that is when Theorem ~\ref{borel} applies. By the above, it is enough to choose, when possible, two subsets $S, \,T$ of $\Delta^+\sqcup \Delta^-_{\pi'}$ which satisfy the conditions of Lemma \ref{regulargeneral}.

As we will see, in most cases, we choose sets $S^+,\, T^+$, with $T^+\subset \pi$ such that $S^+\sqcup T^+=\beta_\pi$ and $S^-,\, T^-$, with $T^-\subset -\pi'$ such that $S^-\sqcup T^-=-\beta_{\pi'}$. Then for all $\beta\in S$, we take $\Gamma_\beta=H_\beta$, as defined in \ref{heisenbergconditions}. By the easy observation that if $\beta\in \pi$, then $H_\beta=\{\beta\}$, we conclude that with these choices condition (1) of  Lemma \ref{regulargeneral} is satisfied by \ref{heisenbergconditions} (2). Moreover, condition (2) follows by \ref{heisenbergconditions} (5). Hence in these cases, it only remains to verify conditions (3) and (4).

As adapted pairs were already constructed for all truncated biparabolic subalgebras of $\mathfrak{sl}_n$ \cite{J5}, we will not consider this case.

\section{Type $B$ }\label{casB}

Let $\lieg$ be simple of type $B_n$ ($n\ge 2$) and $\pi^{\prime} = \pi\setminus
\{\alpha_s\}$, for some $1\leq s\leq n$. The truncated parabolic
$\liep_{\pi', E}$ is then equal to $\liep_{\pi', E} = \lien\oplus
\lieh'\oplus \lien_{\pi'}^-$, where recall that $\lieh'$ is the Cartan
subalgebra of the derived algebra of $\liep_{\pi'}$. Its Levi
factor $\liel_{\pi'}$ is the product of two simple Lie algebras,
one of type $A_{s-1}$ and the other of type $B_{n-s}$. In
particular, if $s = 1$ one has that $\liel_{\pi'} =
\mathfrak{so}_{2n-1}$ and if $s= n,\,\liel_{\pi'} = \liesl_{n}$.

\subsection{Orbits}
Since $w_0 = -\id$ one has that $j = \id$ and $i$ is as follows :
$$i(\alpha_t) = \left\{
\begin{array}{lr}\alpha_{s-t}, &1\leq t\leq s-1\\
\alpha_t, &s\leq t\leq n
\end{array}\right.
$$
Hence the $\langle ij\rangle$ - orbits of $\pi$ are $\Gamma_t = \{\alpha_t,\,
\alpha_{s-t}\}, 1\leq t\leq [\frac{s}{2}]$ and $\Gamma_t =
\{\alpha_t\},\,s\leq t\leq n$.

Then $\mid\! E\!\mid=[\frac{s}{2}]+n-s+1$.

By Theorem~\ref{borel}, if $s$ is odd, $Y(\p_{\pi'\,E})$ is polynomial, since $\varepsilon_{\Gamma}=1$ for all $\Gamma\in E$. If $s$ is even the theory of \cite{FJ2} does not guarantee us that $Y(\trp)$ is polynomial. However, when $s=2$, then $\trp$ equals the centralizer of the nilpotent element corresponding to a non-zero highest root vector $x_{\beta_1}$, that is $\trp=\lieg^{x_{\beta_1}}$, and by \cite{PPY} $Y(\trp)$ is polynomial.
Moreover, in this case an adapted pair was constructed in \cite{J6bis}.

Below, we will construct an adapted pair when $s$ is odd and will give the adapted pair of \cite{J6bis} for $s=2$. 

\subsection{Construction of an adapted pair.}

Here we assume that $\pi'=\pi\setminus\{\alpha_s\}$ with $s$ odd. Note that in this case $\ind\, \trp=\mid\! E\!\mid=n-\frac{s-1}{2}$.

\subsubsection{The sets $S$ and $T$}\label{casBnotations}
Recall the maximal set $\beta_{\pi}$ of strongly orthogonal roots we introduced in section \ref{cascade} and in particular Table I. Denote by
$\beta_{\pi}^0$ the set $\beta_{\pi}^0 = \beta_{\pi}\setminus (\beta_{\pi}\cap \pi)$.

The subset $\pi'$ is a union of two connected components, $\pi_1'$ of type $A_{s-1}$ and $\pi_2'$ of type $B_{n-s}$. One has that $\beta_{\pi'_1}=\{\beta_i':=\alpha_i+\alpha_{i+1}+\cdots  +\alpha_{s-i}\,|\, 1\le i\le \frac{s-1}{2}\}$. In particular, since $s$ is odd $\beta_{\pi_1'}^0 = \beta_{\pi_1'}$.

Also, $\beta_{\pi_2'}^0=\{\beta_i'':=\alpha_{s+2i-1}+2\alpha_{s+2i}+\ldots+ 2\alpha_n\,|\,1\le i\le[\frac{n-s}{2}]\}$ and $\beta_{\pi_2'}\cap {\pi'_2}=\{\alpha_{s-1+2i}\,|\,1\le i\le[\frac{n+1-s}{2}]\}$.

For $S$ we choose $S =S^+\sqcup S^-$, where $S^+=\beta_{\pi}^0$ and $S^-=(-\beta_{\pi_1'})\cup(-\beta_{\pi_2'}^0)$.
%
For $T$ we choose $T = T^+\sqcup T^-$ where $T^+=\beta_{\pi}\cap \pi$ and $T^-= -(\beta_{\pi_2'}\cap \pi_2')$.

\subsubsection{\it Choice of Heisenberg sets.}
For all $\beta\in S$ we take $\Gamma_\beta=H_\beta$, as defined in \ref{heisenbergconditions}. Notice that $S^+\sqcup T^+=\beta_\pi$ and $S^-\sqcup T^-=-\beta_{\pi'}$. By \ref{STcascade}, conditions (1) and (2) of  Lemma~\ref{regulargeneral} are satisfied.
%
%
%
%
%
Condition (4) is also satisfied, since $\mid\!T\!\mid=n-\frac{s-1}{2}=\ind\p_{\pi',\,E}$. It remains to verify condition (3).

\subsubsection{}\label{lemmebaseB}
\begin{lemma} \label{typeBsupport} For the above choice of $S$, one has that $S|_{\lieh _E}$ is a basis for
$h^*_E$.
\end{lemma}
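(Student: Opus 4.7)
The plan is to reduce the statement to a finite linear-algebra calculation. By~\ref{hEmax} we have $\mathfrak h_E = \mathfrak h'=\bigoplus_{t\ne s}k\alpha_t^\vee$, which has dimension $n-1$; and a direct count from Table~I gives $|S|=[n/2]+(s-1)/2+[(n-s)/2]=n-1=\dim \mathfrak h_E$. Hence it suffices to show that $S$ is linearly independent in $\mathfrak h_E^*$, or equivalently, that the only element of $\operatorname{span}(S)\subset \mathfrak h^*$ lying in $k\varpi_s$, the kernel of the restriction $\mathfrak h^*\to \mathfrak h_E^*$, is zero.

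First I would expand each element of $S$ in the fundamental-weight basis. Using the standard $\epsilon$-coordinates of $B_n$, one has $\beta_i=\epsilon_{2i-1}+\epsilon_{2i}$, which translates to $\beta_i=\varpi_{2i}-\varpi_{2i-2}$ (with the boundary replacement $2\varpi_n-\varpi_{n-2}$ when $i=n/2$). Similarly, the $A_{s-1}$ cascade on $\pi_1'$ yields $\beta_j'=\epsilon_j-\epsilon_{s-j+1}$, hence $-\beta_j'=\varpi_{s-j+1}-\varpi_{s-j}-\varpi_j+\varpi_{j-1}$; and the $B_{n-s}$ cascade on $\pi_2'$ yields $-\beta_k''=-\varpi_{s+2k}+\varpi_{s+2k-2}$, with an analogous boundary case when $s+2k=n$.

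The key structural observation, exploiting the fact that $s$ is odd, is a parity splitting of the support: after passing to $\bar\varpi_u:=\varpi_u\bmod\varpi_s$, each $\beta_i$ involves only $\bar\varpi_u$ with $u$ even; each $-\beta_k''$ involves only $\bar\varpi_u$ with $u$ odd and $u>s$; and each $-\beta_j'$ decomposes into an even-indexed part and an odd-indexed part supported in $\{1,3,\dots,s-2\}$. Ordering the columns of the restriction matrix as (even)--(odd $<s$)--(odd $>s$) and the rows as $(\beta_i)$--$(-\beta_j')$--$(-\beta_k'')$ yields a block lower triangular matrix with three square diagonal blocks $E$, $Q^{\mathrm{odd}}$, $R^{\mathrm{odd}}$, so its determinant is the product of the three block determinants.

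The blocks $E$ and $R^{\mathrm{odd}}$ become visibly lower triangular after the obvious reordering, with diagonal entries in $\{\pm 1\}$ (a single diagonal entry replaced by $\pm 2$ in the boundary subcase), so they are nonsingular. The main obstacle will be the middle $(s-1)/2\times(s-1)/2$ block $Q^{\mathrm{odd}}$, whose rows come from the odd parts of the $-\beta_j'$. Inspection of those odd parts reveals a zig-zag pattern: row $j=1$ hits only $\bar\varpi_1$; row $j=2$ hits $\{\bar\varpi_1,\bar\varpi_{s-2}\}$; row $j=3$ hits $\{\bar\varpi_3,\bar\varpi_{s-2}\}$; row $j=4$ hits $\{\bar\varpi_3,\bar\varpi_{s-4}\}$; and so on, with all entries equal to $\pm 1$. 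Iterated Laplace expansion, at each stage along the unique row with a single remaining nonzero entry, will yield $\det Q^{\mathrm{odd}}=\pm 1$; combining the three block determinants then finishes the proof.
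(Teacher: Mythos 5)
Your proposal is correct, and its global strategy coincides with the paper's: both reduce the claim to the nonvanishing of the determinant of the pairing matrix $(\alpha_u^\vee(s_j))_{u\neq s}$ (equivalently, independence modulo $k\varpi_s$), expand the cascade roots in fundamental weights, and observe that the parity of the weight indices forces a block-triangular shape whose three diagonal blocks correspond to $\{\beta_i\}$, $\{-\beta''_k\}$ and $\{-\beta'_j\}$; the first two blocks are bidiagonal with diagonal entries $c_i$, resp.\ $d_k$, exactly as in the paper. The one place where you genuinely diverge is the remaining $(s-1)/2\times(s-1)/2$ block coming from the $A_{s-1}$ cascade on $\pi'_1$. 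You compute it explicitly from $\beta'_j=\epsilon_j-\epsilon_{s-j+1}$ and kill it by an iterated Laplace expansion along the zig-zag $1,\,(1,s-2),\,(3,s-2),\,(3,s-4),\dots$ (your pattern and the claim $\det Q^{\mathrm{odd}}=\pm1$ check out: consecutive rows share exactly one column and row $1$ is a singleton, so the expansion never stalls). The paper instead invokes the structural facts that each odd simple root $\alpha_{2i-1}$, $i\le(s-1)/2$, lies in exactly one $H_{\beta'_j}$ (\ref{heisenbergconditions}(6)) and that $H_{\beta'_j}\subset\Delta^+_{\beta'_{j-1}}$, so that after reindexing the rows by the cascade order the block is unitriangular. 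Your version is more elementary and self-contained; the paper's is shorter and reuses the cascade formalism, which is why essentially the same sentence settles the analogous blocks in types $C$ and $D$ later on. Either argument completes the proof.
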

\begin{proof}
Let $S=\{s_j\,|\,1\le j\le n-1\}$ and take for a basis of $\mathfrak h_E$ the set $\{\alpha_i^\vee\,|\,1\le i\le n,\, i\ne s\}$. It is enough to show that the determinant of the matrix $(\alpha_i^\vee(s_j))_{i, \,j}$ is non-zero. 


First of all notice that for all $i$, with $1\le i\le [n/2]$, $\beta_i=c_i\varpi_{2i}-\varpi_{2i-2}$, where $c_{n/2}=2$ when $n$ is even and $c_i=1$ otherwise. (Here we have set $\varpi_0=0$.) Similarly, $\beta''_i=d_i\varpi_{s+2i}-\varpi_{s+2i-2}$, with $d_{\frac{n-s}{2}}=2$ if $n$ is odd and $d_i=1$ otherwise. Then up to a sign we have:
$\det\,(\alpha_i^\vee(s_j))=\det\,\left(\begin{array}{cc}A&B\\0&C\end{array}\right)$, where $A=(\alpha_{2i}^\vee(\beta_i))$ is an upper triangular matrix with $c_i,\, 1\le i\le [n/2]$ on the diagonal. Then up to a sign $\det\, (\alpha_i^\vee(s_j))=c_{[n/2]}\det\, C$.

On the other hand $C=\left(\begin{array}{cc}D&0\\0&F\end{array}\right)$, where $D=(\alpha_{s+2i}^\vee(\beta_j''))$ is an upper triangular matrix with $d_i$ on the diagonal, hence $\det\, D=d_{[\frac{n-s}{2}]}\det\, F$.

Finally, $F=-(\alpha_{2i-1}^\vee(\beta_j'))$ and each of the $\alpha_{2i-1}$,\, $1\le i\le \frac{s-1}{2}$, lies in exactly one $H_{\beta_j'}$ by~\ref{heisenbergconditions}~(6). Recall that $\{\beta'_i\,|\,1\le i\le \frac{s-1}{2}\}$ is the set $\beta_{\pi_1'}$ of strongly orthogonal roots of $\pi_1'$, which is of type $A_{s-1}$. In particular, they are totally ordered by the order defined in \ref{cascade}. With the notations of \ref{cascade}, $\Delta^+_{i+1}=\Delta^+_{\beta'_{i}}$ and $\Delta^+_{i+1}\subset \Delta^+_{i}$ for all $i$, with $1\le i\le \frac{s-1}{2}$ (where we set $\Delta^+_1=\Delta^+_{\pi'_1}$). Moreover, by \ref{heisenbergconditions} (1) and the above, $H_{\beta'_i}\subset \Delta^+_{\beta'_{i-1}}=\Delta^+_{i}$. Let $\alpha_{t_j}\in H_{\beta_j'}$, with $\{\alpha_{t_j}\,|\,1\le j\le \frac{s-1}{2}\}=\{\alpha_{2i-1}\,|\,1\le i\le \frac{s-1}{2}\}$. Then up to a sign, $\det\, F=\det\, (\alpha_{t_i}^\vee(\beta_j'))$ and the latter is an upper triangular matrix with $1$ on the diagonal. In particular, $\det\, F\ne 0$.
\end{proof}


\subsubsection{}
Now take $\pi'=\pi\setminus\{\alpha_2\}$; as we already said, an adapted pair has been constructed in \cite{J6bis}.

The sets $S$ and $T$ are given in~\cite[Tables]{J6bis} for $B_n$, $n\ge 3$ as follows: \[S=\{\alpha_2,\,\alpha_3,\,\ldots,\,\alpha_{n-1},\,\alpha_1+\alpha_2+\ldots+\alpha_n\}\] and \[T=\{\alpha_i+2(\alpha_{i+1}+\ldots+\alpha_n),\, 1\le i\le n-1,\,-\alpha_1\}.\]


\section{Type $D$}

As expected, type $D$ is almost identical with $B$. We will give the details for completeness.

Let $\g$ be simple of type $D_n$ ($n\ge 4$) and $\pi'=\pi\setminus\{\alpha_s\}$ for some $1\le s\le n$.
If $1\le s\le n-3$, then the Levi factor $\l_{\pi'}$ of the truncated parabolic subalgebra $\p_{\pi',\,E}$ is the product
of two simple Lie algebras, one of type $A_{s-1}$ and another of type $D_{n-s}$ (in particular, if $s=1$, $\l_{\pi'}$
is a simple Lie algebra of type $D_{n-1}$).

If $s=n-2$, then the Levi factor $\l_{\pi'}$ is the product of three simple Lie algebras, of types $A_{n-3}$, $A_1$ and $A_1$.

Finally, if $s=n-1$ or $s=n$, then $\l_{\pi'}$ is a simple Lie algebra of type $A_{n-1}$.

\subsection{Orbits.}
Recall that $j=\id$ if $n$ is even and, if $n$ is odd, $j$ is the involution of $\pi$ that interchanges $\alpha_{n-1}$ and $\alpha_n$ and fixes the rest of the simple roots.
By Theorem~\ref{borel} one can show that $Y(\p_{\pi',\,E})$ is polynomial (that is $\varepsilon_{\Gamma}=1$ for all $\Gamma\in E$) if $s$ is odd or $s=n-1$ and $s$ is even. Since the cases $s=n,\, n-1$ are symmetric, we may consider that $s$ is odd. Note that then $n, \,n-s$ have different parity.

We will compute the $\langle ij\rangle$ - orbits in $\pi$ when $s$ is odd.

Assume first that $1\le s\le n-2$. The $\langle ij\rangle$ - orbits of $\pi$ are $\Gamma_t=\{\alpha_t,\,\alpha_{s-t}\}$, $1\le t\le\frac{s-1}{2}$, $\Gamma_t=\{\alpha_t\}$, $s\le t\le n-2$ and $\Gamma_{n-1}=\{\alpha_{n-1}, \alpha_n\}$. Then $|E|=n-\frac{s+1}{2}$.

Assume that $s=n$ and is odd. The $\langle ij\rangle$ - orbits in $\pi$ are $\Gamma_t=\{\alpha_t,\,\alpha_{n-t}\}$, $2\le t\le \frac{n-1}{2}$ and $\Gamma_1=\{\alpha_1,\,\alpha_{n-1},\,\alpha_n\}$. Then $|E|=\frac{n-1}{2}$.

In \cite{PPY}, the authors show that for $s=2$, $Y(\trp)$ is also polynomial; an adapted pair is constructed in \cite{J6bis} in this case - we give it at the end of this section.


\subsection{Construction of an adapted pair.}

\subsubsection{The sets $S$ and $T$.}
Assume first that $1\le s\le n-2$ and $s$ is odd. Denote by $\pi'_1$ the connected component of $\pi'$ of type $A_{s-1}$ and by $\pi'_2$ the connected component of $\pi'$ of type $D_{n-s}$.
Recall the sets $\beta_{\pi}$ of strongly orthogonal positive roots (see Table I) and $\beta^0_{\pi}$ as defined in~\ref{casBnotations}. Note in particular that $|\beta_\pi^0|=\frac{n-1}{2}$ if $n$ odd and $|\beta_\pi^0|=\frac{n}{2}-1$ if $n$ even and that $n, n-s$ have different parity.

Set $\beta_\pi^0=\{\beta_i\,|\, 1\le i\le \frac{n-1}{2}\}$ if $n$ is odd and $\beta_\pi^0=\{\beta_i\,|\, 1\le i\le \frac{n}{2}-1\}$ if $n$ is even. Then, in the former case, we have $\beta^0_{\pi_2'}=\{\beta_i''\,|\,1\le i\le \frac{n-s}{2}-1\}$ and in the latter $\beta^0_{\pi_2'}=\{\beta_i''\,|\,1\le i\le \frac{n-s-1}{2}\}$. The roots $\beta''_i$ are given by the formulae of Table I for type $D$ by shifting $i\rightarrow s+i$. Also $\beta_{\pi_1'}=\beta^0_{\pi'_1}=\{\beta'_i\,\mid 1\le i\le \frac{s-1}{2}\}$, where the $\beta'_i$ are given by the formulae of Table I for Type $A$ by setting $n=s-1$. 

Set now $S^+=\beta^0_{\pi}$ if $n$ is odd and $S^+=\beta^0_{\pi}\sqcup\{\alpha_n\}$ if $n$ is even. Set also $S^-=(-\beta^0_{\pi'_1})\sqcup(-\beta^0_{\pi'_2})$ if $n$ is even and $S^-=(-\beta^0_{\pi'_1})\sqcup(-\beta^0_{\pi'_2})\sqcup\{-\alpha_n\}$ if $n$ is odd.

Set $T^+=\beta_\pi\cap \pi$ if $n$ is odd and $T^+=(\beta_\pi\cap \pi)\setminus \{\alpha_n\}$ if $n$ is even. Set also $T^-=-(\beta_{\pi_2'}\cap \pi'_2)\setminus \{-\alpha_n\}$ if $n$ is odd and $T^-=-(\beta_{\pi_2'}\cap \pi'_2)$ if $n$ is even. Take $T=T^+\sqcup T^-$. In both cases, $|T|=n-\frac{s+1}{2}=\ind\,\trp$. \\

Assume now that $s=n$ and is odd. Then $\pi'$ is of type $A_{n-1}$. Let $\{\beta_i\,|\,1\le i\le\frac{n-1}{2}\}$ be the elements in $\beta_\pi$ as noted in Table I and $\beta_{\pi'}=\{\beta'_i:=\alpha_i+\ldots+\alpha_{n-i}\,|\,1\le i\le\frac{n-1}{2}\}$.

We choose $S^+=\{\beta_i\,|\,1\le i\le\frac{n-1}{2}\}=\beta^0_{\pi}$, $S^-=-\beta_{\pi'}$ and $S=S^+\sqcup S^-$.
%
%
For $T^\pm$ we choose $T^+=\beta_{\pi}\cap \pi=\{\alpha_{2i-1},\,1\le i\le\frac{n-1}{2}\}$, $T^-=\emptyset$ and $T=T^+$.

Then $\mid T\mid=\frac{n-1}{2}=\mid E\mid=\ind \p_{\pi',\,E}$.

\subsubsection{\it Choice of Heisenberg sets.}
For each $\beta\in S$, set $\Gamma_{\beta}=H_{\beta}$ with the notation of \ref{heisenbergconditions}. As in type $B$, we have that $S^+\sqcup T^+=\beta_\pi$ and $S^-\sqcup T^-=-\beta_{\pi'}$. Then conditions (1) and (2) of Lemma
\ref{regulargeneral} are immediate, condition (4) is verified above. Finally, condition (3) also follows by arguments similar to the proof of Lemma~\ref{lemmebaseB}.

\subsubsection{} Finally take $\pi'=\pi\setminus\{\alpha_2\}$, and $n\ge 4$, which is the case when the truncated parabolic subalgebra coincides with the centralizer of the highest root vector. An adapted pair has been constructed in \cite{J6bis}. The sets $S$ and $T$ are given in~\cite[Tables]{J6bis} for $D_n$, $n\ge 4$ as follows:

\[S=\{\alpha_2,\,\alpha_3,\ldots,\,\alpha_{n-2},\,\alpha_1+\alpha_2+\ldots+\alpha_{n-1},\,\alpha_1+\alpha_2+\ldots+\alpha_{n-2}+\alpha_n\}\]
and \[T=\{\alpha_i+2\alpha_{i+1}+\ldots+2\alpha_{n-2}+\alpha_{n-1}+\alpha_n,\,1\le i\le n-2,\,\alpha_n,\,-\alpha_1\}.\]

\section{Type $C$}\label{C}
Let $\lieg$ be a simple Lie algebra of type $C_n$ ($n\ge 3$) and $\pi' =
\pi\setminus \{\alpha_s\}$, with $1\le s\le n$. Then $\pi'=\pi'_1\cup \pi'_2$, where $\pi'_1$ is of type $A_{s-1}$ and $\pi'_2$ is of type
$C_{n-s}$. The Levi factor $\liel_{\pi'}$ of the truncated
parabolic $\liep_{\pi',\, E}$ is the product $\mathfrak{l}_{\pi'} = \liesl_s\times
\mathfrak{sp}_{2(n-s)}$. By Theorem \ref{borel}, $Y(\trp)$ is polynomial in $\ind\, \trp$ generators for all $s$, with $1\le s\le n$.

\subsection{Orbits}
The involutions of the Dynkin diagram $j$ and $i$ are given by $j
= \id$ and :
$$i(\alpha_t) = \left\{
\begin{array}{lr}\alpha_{s-t}, &1\leq t\leq s-1.\\
\alpha_t, &s\leq t\leq n.
\end{array}\right.
$$
Hence the $\langle ij\rangle=\langle i\rangle$ - orbits of the Dynkin diagram are
$$\Gamma_t = \left\{
\begin{array}{lr}\{\alpha_t,\, \alpha_{s-t}\},& 1\leq t\leq [s/2].\\
\{\alpha_t\},& s\leq t\leq n.
\end{array}\right.$$
They are $|E|= n-s+1 + [s/2]$ in number, hence $\ind\,\trp = n-s+1 + [s/2]$.

\subsection{Construction of an adapted pair}

\subsubsection{The sets $S$ and $T$}\label{sectionsupporty}
In types $B_n$ and $D_n$ we had that
\[ S\cup T = \beta_{\pi}\cup (-\beta_{\pi'}).
\] In type $C_n$ such a choice would not work; notice in particular that $\beta_{\pi_2'}\subset \beta_{\pi}$.

Recall that $\beta_{\pi_1'}^0$
denotes the set $\beta_{\pi_1'}\setminus (\beta_{\pi'_1}\cap{ \pi'_1})$.

Denote by $\beta_i$ the elements in $\beta_{\pi}$, and $\beta'_i$ the elements in $\beta_{\pi'_1}$.
For all $i$ with $1\leq i\leq n-1$ set $\gamma_i = \beta_i - \alpha_i$.

We will distinguish the following two cases :

(i) If $s$ is odd, we choose

$$\begin{array}{l}\label{supporty1} S^+ = \{\gamma_{2i-1} ; 1\leq i\leq [n/2]\},\\
S^-=\{-\gamma_{2j}; (s+1)/2\leq j\leq [(n-1)/2]\}\cup
(-\beta_{\pi'_1})
\end{array}$$
and
$$\begin{array}{l}\label{supportV1} T^+ =
\{\beta_{2i-1}; 1\leq i\leq [(n+1)/2]\},\\
T^-=\{-\beta_{2j}; (s+1)/2\leq
j\leq [n/2]\}.
\end{array}
$$

(ii) If $s$ is even set $t:=[s/4]$ and choose
\begin{eqnarray*}\label{supporty2}S^+ &=& \{\gamma_{2i-1} ; \, 1\leq i\leq t,\, \beta_{2t+1}, \gamma_{2j};\,t+1\leq j\leq [(n-1)/2]\}, \\
S^- &=& \{-\gamma_{2j-1}; s/2+1\leq j\leq [n/2]\}\cup
(-\beta_{\pi'_1}^0)
\end{eqnarray*}
and
\begin{eqnarray*}\label{supportV2} T^+ &=& \{\beta_{2i-1} ;\, 1\leq i\leq t,\, \beta_{2j} ;\, t+1\leq j\leq [n/2]\},\\
T^- &=& \{-\beta_{2j-1};\, s/2+1\leq j\leq [(n+1)/2],
-\beta_{s/2}'\}.
\end{eqnarray*}

\subsubsection{Choice of Heisenberg sets} Set $S=S^+\sqcup S^-$. For every $\gamma\in S$, we will define a Heisenberg set of centre $\gamma$, $\Gamma_\gamma$.

For all $\gamma\in S\cap (\beta_{\pi}\cup (-\beta_{\pi'}))$, we set $\Gamma_\gamma=H_\gamma$. For the rest of the roots in $S$, namely the $\gamma_i$, (resp. $-\gamma_i$) we define $\Gamma_{\gamma_i}:=H_{\beta_i}^0\sqcup H_{\beta_{i+1}}$ (resp. $\Gamma_{-\gamma_i}=H_{-\beta_i}^0\sqcup H_{-\beta_{i+1}}=-\Gamma_{\gamma_i}$). It is better to view these sets schematically, using certain (shifted) Young tableaux that we define right below. These diagrams were used in \cite{Shi} for a different purpose.

We display the positive roots in a shifted diagram $T(C_n)$ of
shape $(2n-1,\, 2n-3,\, \dots,\, 1)$, that is we assign to each box $(i,\,
j)$ of $T(C_n)$ the positive root $t_{i,\, j}$, where :
$$t_{i, \,j} = \left\{
\begin{array}{lr}\alpha_i + \cdots + 2(\alpha_j + \cdots +\alpha_{n-1}) + \alpha_n, &i\leq j\leq n-1.\\
\alpha_i + \cdots + \alpha_{2n-j}, & n\leq j\leq 2n-i.
\end{array}\right.$$
For example, the diagram for $C_3$ is :
\begin{center}
\begin{tabular}{lllllllll}
$2\alpha_1 + 2\alpha_2 + \alpha_3$&&$\alpha_1 + 2\alpha_2 +
\alpha_3$&&$\alpha_1 + \alpha_2 + \alpha_3$&&$\alpha_1
+\alpha_2$&&$\alpha_1$\\
&&$2\alpha_2 + \alpha_3$&&$\alpha_2 + \alpha_3$&&$\alpha_2$&\\
&&&&$\alpha_3$&&&&
\end{tabular}
\end{center}
Notice that for all $i$, with $1\leq i\leq n$, the $i$ - th line of $T(C_n)$
is the Heisenberg set $H_{\beta_i}$, with the centre $\beta_i$
lying on the $(i,\, i)$ box, i.e. $H_{\beta_i} = \{t_{i,\, j}\mid i\leq j\leq
2n-i\}$ and $t_{i,\, i} = \beta_i$. The right corners of the diagram
correspond to the simple roots.

One has that $\gamma_i=\beta_i - \alpha_i=t_{i,\, i+1}$ for all $i$, with
$1\leq i\leq n-1$. Then $\Gamma_{\gamma_i}$ is the set
$\Gamma_{\gamma_i} = \{t_{\ell,\, m} \mid\ell \in\{ i,\, i+1\},\, m\geq i+1\}$. We will show that it is a Heisenberg set of centre $t_{i,\,
i+1}$. Then of course, $\Gamma_{-\gamma_i}=-\Gamma_{\gamma_i}$ will be a Heisenberg set of centre $-\gamma_i$. \\

\begin{lemma}\label{T(C)} Let $\Gamma_{\gamma_i}$ be the set
$\Gamma_{\gamma_i} = \{t_{\ell,\, m} \mid \ell \in\{i,\, i+1\},\,m\geq i+1\}$.
For all $j$, with $i+1\leq j\leq 2n-i-1$, one has :
\[t_{i+1,\, j}+t_{i,\, 2n+1-j} = t_{i,\, i+1}.\]
In particular, $\Gamma_{\gamma_i}$ is a Heisenberg set of centre $t_{i,\, i+1} (=\gamma_i)$.
\end{lemma}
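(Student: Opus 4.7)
The plan is to establish the identity $t_{i+1,j} + t_{i,2n+1-j} = t_{i,i+1}$ by direct case analysis on the explicit formulae for $t_{i,j}$, and then read off the Heisenberg property as a pairing argument.

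First I would expand both sides of the identity in the basis of simple roots, splitting the verification into three regimes according to where $j$ sits relative to $n$: (a) $i+1 \le j \le n-1$, (b) $j = n$, and (c) $n+1 \le j \le 2n-i-1$. These cases are dictated by the two branches of the definition of $t_{i,j}$. In case (a), $t_{i+1,j}$ lies in the first branch while $t_{i,2n+1-j}$ lies in the second (since $2n+1-j \ge n+2$); adding coefficient by coefficient, the single $\alpha$'s coming from $t_{i,2n+1-j} = \alpha_i + \alpha_{i+1} + \cdots + \alpha_{j-1}$ double up with the singles in $t_{i+1,j} = \alpha_{i+1} + \cdots + \alpha_{j-1} + 2\alpha_j + \cdots + 2\alpha_{n-1} + \alpha_n$ at positions $i+1,\ldots,j-1$, producing exactly $\alpha_i + 2(\alpha_{i+1} + \cdots + \alpha_{n-1}) + \alpha_n = t_{i,i+1}$. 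Cases (b) and (c) are entirely analogous: in (b) both summands lie in the second branch and decompose cleanly, in (c) the roles of the two branches are swapped. The whole step is routine root-coefficient bookkeeping.

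Having the identity, the Heisenberg claim is combinatorial. As $j$ varies over $\{i+1,\ldots,2n-i-1\}$ the first components $t_{i+1,j}$ exhaust the $(i+1)$-row part of $\Gamma_{\gamma_i}$, while the second components $t_{i,2n+1-j}$ run bijectively through $t_{i,i+2},\ldots,t_{i,2n-i}$, i.e.\ the $i$-row part of $\Gamma_{\gamma_i}$ with the centre $t_{i,i+1}$ removed. Thus the identity partitions $\Gamma_{\gamma_i} \setminus \{t_{i,i+1}\}$ into disjoint pairs summing to $t_{i,i+1}$. Uniqueness of the partner is automatic (from $\alpha + \alpha' = \gamma_i = \alpha + \alpha''$ one gets $\alpha' = \alpha''$), so the definition of Heisenberg set in Section \ref{heisenberg} is verified, with centre $\gamma_i = t_{i,i+1}$.

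The only delicate point is the index bookkeeping around $j = n$ where the two branches of the definition of $t_{i,j}$ meet; drawing the shifted tableau $T(C_n)$ and viewing the pairs as symmetric positions in rows $i$ and $i+1$ around the centre box $(i,i+1)$ makes the count transparent. No conceptual difficulty is expected beyond this careful accounting.
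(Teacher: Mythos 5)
Your proposal is correct and follows essentially the same route as the paper: a direct coefficient-by-coefficient verification of the identity split according to which branch of the definition of $t_{i,j}$ applies, followed by the observation that the identity pairs off $\Gamma_{\gamma_i}\setminus\{t_{i,i+1}\}$. The only quibble is that for $j=n+1$ both summands still lie in the second branch (since $t_{i,2n+1-j}=t_{i,n}$), so the branches are not literally ``swapped'' there --- the paper accordingly treats $j=n+1$ as a separate case --- but this does not affect the arithmetic.
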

\begin{proof}
For $j\leq n-1$ one has :
\begin{eqnarray*}t_{i+1,\, j}+t_{i, \,2n+1-j} &=& \alpha_{i+1} + \cdots
+2(\alpha_j + \cdots +\alpha_{n-1})+\alpha_n + \alpha_i + \cdots
+\alpha_{j-1} \\
&=& \alpha_i +2(\alpha_{i+1}+\cdots +\alpha_{n-1}) +\alpha_n \\
&=& t_{i,\, i+1}.
\end{eqnarray*}
For $j = n$ we have :
\[t_{i+1,\, n} + t_{i, \,n+1} = (\alpha_{i+1}+\cdots +\alpha_n) +(\alpha_i+\cdots+\alpha_{n-1}) = t_{i,\, i+1}.\]
The cases $j=n+1$ and $j\ge n+2$ follow by similar calculations. We conclude that for every $t\in \Gamma_{\gamma_i}\setminus \{\gamma_i\}$, there exists a (unique) $t'\in \Gamma_{\gamma_i}\setminus \{\gamma_i\}$, such that $t+t'=\gamma_i$ and $\Gamma_{\gamma_i}$ is a Heisenberg set.
\end{proof}

\subsubsection{Decomposition into Heisenberg sets} We will show that the sets
$\Gamma^\pm:=\bigsqcup\limits_{\gamma\in S^\pm}\Gamma_{\gamma}$ defined in the previous section complement
$T^\pm$ in $\Delta^+$ (resp. $\Delta_{\pi'}^-$), i.e. \[\Delta^+ = \Gamma^+\sqcup T^+,\quad
\Delta_{\pi'}^-=\Gamma^-\sqcup T^-.\] This easily follows by the decomposition in~ \ref{heisenbergconditions}~(\ref{deco}). 
Indeed, take for example the case $s$ odd. Then $\Gamma^+=\bigsqcup\limits_{i=1}^{[n/2]}(H_{\beta_{2i-1}}^0\sqcup H_{\beta_{2i}})$. Clearly, a complement of this in $\Delta^+$ is $T^+$.

Similarly, $$\Gamma^-=\left(\bigsqcup\limits_{i=1}^{(s-1)/2}H_{-\beta'_i}\right)\sqcup \left(\bigsqcup\limits_{i=(s+1)/2}^{[(n-1)/2]}(H_{-\beta_{2i}}^0\sqcup H_{-\beta_{2i+1}})\right).$$ Again, a complement of this set in $\Delta_{\pi'}^-$ is $T^-$.

The case $s$ even follows similarly.

\subsubsection{}\label{C2} We will show that condition (2) of Lemma \ref{regulargeneral} holds.

Set $O^\pm=\coprod\limits_{\gamma\in S^\pm} \Gamma_{\gamma}^0$.

\begin{lemma} If any two roots $\alpha,\, \beta\in O^\pm$ satisfy $\alpha+\beta =
\gamma$, for some $\gamma\in S^\pm$ then $\alpha,\,
\beta\in \Gamma_{\gamma}^0$.
\end{lemma}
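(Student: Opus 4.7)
The plan is to locate $\alpha$ and $\beta$ in the unique cascade Heisenberg sets containing them, using the partition $\Delta^+=\bigsqcup_K H_{\beta_K}$ from \ref{heisenbergconditions}(2), and then exploit properties (4) and (5) of \ref{heisenbergconditions}. I treat $O^+$ in detail; the statement for $O^-$ will follow by applying the same argument separately inside the subsystems $\Delta^-_{\pi'_1}$ (type $A$) and $\Delta^-_{\pi'_2}$ (type $C$), which are disjoint, so the identity $\alpha+\beta=\gamma$ forces $\alpha,\beta,\gamma$ to lie in a common subsystem.

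First I would split on the form of $\gamma\in S^+$. If $\gamma=\beta_{2t+1}$ (possible only in the $s$ even case), then $\alpha+\beta=\beta_{2t+1}$ together with property (5) of \ref{heisenbergconditions} forces $K=L=2t+1$ and $\alpha,\beta\in H_{\beta_{2t+1}}^0=\Gamma_\gamma^0$. Otherwise $\gamma=\gamma_i=t_{i,i+1}\in H_{\beta_i}$. Writing $\alpha\in H_{\beta_K}$, $\beta\in H_{\beta_L}$ with $K\le L$, property (5) applied to $\alpha+\beta\in H_{\beta_i}$ forces $K=i$, and $L=i$ is excluded by property (4), as it would give $\alpha+\beta=\beta_i\ne\gamma_i$.

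The key step is to exclude $L\ge i+2$. For such $L$, every root of $H_{\beta_L}$ is supported in $\{\alpha_L,\dots,\alpha_n\}$, so the coefficient of $\alpha_{i+1}$ in $\beta$ vanishes. Since $\gamma_i=\alpha_i+2\alpha_{i+1}+\cdots+2\alpha_{n-1}+\alpha_n$ has coefficient $2$ at $\alpha_{i+1}$, the root $\alpha=t_{i,j}$ must itself have coefficient $2$ there, and the explicit formula for $t_{i,j}$ leaves only $j\in\{i,i+1\}$, i.e.\ $\alpha\in\{\beta_i,\gamma_i\}$. Both are excluded from $O^+$ ($\beta_i\in T^+$, $\gamma_i\in S^+$), a contradiction. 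Hence $L=i+1$. Moreover $\alpha\ne\beta_i$ (else $\beta=-\alpha_i\notin\Delta^+$) and $\alpha\ne\gamma_i$ (being in $O^+$), so $\alpha\in H_{\beta_i}^0\setminus\{\gamma_i\}$ and $\beta\in H_{\beta_{i+1}}$, both lying in $\Gamma_{\gamma_i}^0=(H_{\beta_i}^0\setminus\{\gamma_i\})\sqcup H_{\beta_{i+1}}$.

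The main obstacle is precisely this coefficient argument ruling out $L\ge i+2$; the rest is bookkeeping with the cascade partition. For $O^-$ the two subsystems split the argument: if $\gamma=-\beta_i'$ then property (5) applied in the $\pi'_1$ cascade immediately yields $\alpha,\beta\in H_{-\beta_i'}^0=\Gamma_{-\beta_i'}^0$, and if $\gamma=-\gamma_{2j}$ (or $-\gamma_{2j-1}$ in the $s$ even case) the $O^+$ analysis above is rerun inside the type-$C$ subsystem $\pi'_2$ after replacing positive roots by their negatives.
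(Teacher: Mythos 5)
Your proof is correct, and its skeleton (partition of $\Delta^{\pm}$ into the cascade sets $H_{\beta_K}$, then properties (4) and (5) of \ref{heisenbergconditions} to handle the centres $\beta_K\in S$ and to place $\alpha$ in $H_{\beta_i}$ when $\gamma=\gamma_i$) coincides with the paper's. Where you diverge is in the finishing step for $\gamma=\gamma_i$: you pin down the second summand by an explicit coefficient computation, showing that $L\ge i+2$ would force $\alpha\in\{\beta_i,\gamma_i\}$ via the formulas for $t_{i,j}$, and hence that $\beta\in H_{\beta_{i+1}}$. The paper never needs to locate $\beta$'s cascade set at all: since $\gamma=\alpha+\beta$ with $\beta$ positive, $\gamma>\alpha$ in the root order, and since $\gamma\ne\beta_i$ while $\beta_i$ is maximal, this alone gives $\alpha\in H_{\beta_i}^0\subset\Gamma_{\gamma_i}^0$; the defining uniqueness property of the Heisenberg set $\Gamma_{\gamma_i}$ then forces $\beta=\gamma-\alpha\in\Gamma_{\gamma_i}^0$ for free. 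So the paper's argument is softer and type-independent, whereas yours is more computational but makes the combinatorics transparent (it exhibits exactly where $\beta$ lives, namely $L=i+1$). Two small remarks: your exclusion of $\alpha\in\{\beta_i,\gamma_i\}$ via membership in $T^+$ and $S^+$ does require the case-by-case check that $\beta_i\in T^+$ whenever $\gamma_i\in S^+$ (it holds in both the $s$ odd and $s$ even choices, but the simpler observation that $\alpha=\beta_i$ gives $\beta=-\alpha_i\notin\Delta^+$, which you use later anyway, avoids this); and your reduction of $O^-$ to the two orthogonal components of $\pi'$ is fine since a sum of roots from distinct components is never a root.
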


\begin{proof} We will prove the statement for $O^+ $. Suppose that $\alpha + \beta = \gamma$ and
$\gamma\in S^+$. Since $\Gamma_\gamma$ is a Heisenberg set, it is enough to show that $\alpha\in \Gamma_{\gamma}^0$ (or $\beta\in \Gamma_\gamma^0$). Notice that the elements of $S^+$ are all of the form $\gamma_i$ or $\beta_i\in \beta_{\pi}$. In the latter case, lemma follows by \ref{heisenbergconditions} (\ref{prop2}).

Let us suppose that $\gamma=\gamma_i=\beta_i-\alpha_i$. Then $\gamma\in H^0_{\beta_i}$ and again by \ref{heisenbergconditions}~(\ref{prop2}), $\alpha\in H_{\beta_i}$ (or $\beta\in H_{\beta_i}$). Let us assume the former holds.

Since all roots $\gamma,\, \alpha,\, \beta$ are positive, we have $\gamma >\alpha$ (here $>$ denotes the usual partial order of the root
lattice). Since both $\gamma,\, \alpha$ are in $H_{\beta_i}$ and $\gamma\in H^0_{\beta_i}$, one has that $\alpha\in
H_{\beta_i}^0$. But $H_{\beta_i}^0\subset \Gamma_{\gamma}$, hence $\alpha\in
\Gamma_{\gamma}$ and then $\alpha\in\Gamma_{\gamma}^0$.
\end{proof}

\subsubsection{}
\begin{lemma} \label{typeCsupport}One has that $S|_{\lieh _E}$ is a basis for
$h^*_E$.
\end{lemma}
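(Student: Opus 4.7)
The plan is to follow the strategy of Lemma~\ref{lemmebaseB}: since $|S|=n-1=\dim\mathfrak{h}_E^*$ and $\{\alpha_i^\vee\mid 1\le i\le n,\,i\ne s\}$ is a basis of $\mathfrak{h}_E=\mathfrak{h}'$, it suffices to check that the matrix $M=(\alpha_i^\vee(\sigma))_{i\ne s,\,\sigma\in S}$ is non-singular.

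First I will record the coordinates of the roots in $S$ in the fundamental weight basis. A short calculation with the type $C_n$ Cartan matrix gives $\beta_i=2\varpi_i-2\varpi_{i-1}$ for all $1\le i\le n$ (with $\varpi_0:=0$), whence
$$\alpha_j^\vee(\gamma_i)=\delta_{j,\,i+1}-\delta_{j,\,i-1},\qquad 1\le i\le n-1,$$
so in particular $\alpha_j^\vee(\gamma_i)=0$ whenever $j\ge i+2$. Moreover $\beta'_i=\alpha_i+\alpha_{i+1}+\cdots+\alpha_{s-i}$ sits in the $A_{s-1}$ sub-system, so $\alpha_j^\vee(\beta'_i)$ is non-zero only at $j\in\{i-1,\,i,\,s-i,\,s-i+1\}$ and vanishes for every $j\ge s+1$.

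Next I will order the rows of $M$ as block $A=\{1,\ldots,s-1\}$ followed by block $B=\{s+1,\ldots,n\}$, and partition the columns of $S$ into an ``$A$-part'' --- those $\sigma\in S$ on which $\alpha_j^\vee$ vanishes for every $j\in B$ --- and a ``$B$-part'' consisting of the remaining columns. The formulas above show that the $A$-part is exactly the union of the $-\beta'\in -\beta_{\pi'_1}^0$, the $\pm\gamma_k\in S$ with index $k\le s-1$, and the column $\beta_{2t+1}$ when $s$ is even; the $B$-part consists of the $\pm\gamma_k\in S$ with $k\ge s$. A direct count (as in~\ref{sectionsupporty}) gives $s-1$ columns in the $A$-part and $n-s$ in the $B$-part, so $M$ is block upper-triangular,
$$M=\begin{pmatrix}M_A & \ast\\ 0 & M_B\end{pmatrix},$$
with $M_A$ of size $(s-1)\times(s-1)$ and $M_B$ of size $(n-s)\times(n-s)$, whence $\det M=\det M_A\cdot\det M_B$.

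The final step is to verify that each factor is non-zero. The block $M_B$ is the easy one: each column $\pm\gamma_k$ with $k\ge s$ contributes at most two entries, namely $\mp 1$ at row $k-1$ and $\pm 1$ at row $k+1$ (when these rows lie in $B$), so ordering the columns by increasing $k$ makes $M_B$ upper-triangular with unit diagonal and $\det M_B=\pm 1$. The main obstacle will be $M_A$, where the small-$\gamma_k$ columns (with two non-zero entries, at rows $k\mp 1$) and the $-\beta'_i$ columns (with four non-zero entries, at rows $i-1,\,i,\,s-i,\,s-i+1$) interact. I would use the small-$\gamma_k$ columns together with $\beta_{2t+1}$ (in the $s$-even case) to eliminate adjacent pairs of rows in $A$ by column operations, reducing the determinant to that of a sub-matrix indexed by the $-\beta_{\pi'_1}^0$-columns on the remaining rows. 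This sub-matrix has exactly the shape that appears in the construction of an adapted pair for a maximal parabolic of $\mathfrak{sl}_s$ in~\cite{J5}, where its non-singularity was established by an argument entirely analogous to the proof of Lemma~\ref{lemmebaseB}.
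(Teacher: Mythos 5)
Your argument is correct, and it rests on the same two pillars as the paper's proof: the identity $\gamma_i=\varpi_{i+1}-\varpi_{i-1}$, which gives each $\gamma$-column at most two nonzero entries, and, for the only nontrivial block, the fact (via \ref{heisenbergconditions} (6) and the nesting of the cascade of $A_{s-1}$) that the matrix of the $\beta'_j$ against a suitable set of simple roots of $\pi'_1$ is triangular with $\pm 1$ diagonal after permutation --- exactly the endgame of Lemma~\ref{lemmebaseB}. Where you genuinely diverge is in the order of block-triangularization: you split the rows by Levi component, $\{1,\dots,s-1\}$ against $\{s+1,\dots,n\}$, which disposes of the entire $C_{n-s}$ side in one stroke (your $M_B$, with columns ordered by increasing $k$, is upper triangular with unit diagonal since $k\mapsto k+1$ indexes the diagonal and the only other entry sits two places above it), whereas the paper first extracts the even-indexed rows against the columns $\gamma_{2j-1}$ and only afterwards separates the two Levi factors. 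Both routes terminate at the same matrix $F=-(\alpha_{2i-1}^\vee(\beta'_j))$ (resp.\ $F'$ for $s$ even) and conclude identically, so the gain of your version is mainly conceptual: it makes visible that the only obstruction lives inside the $A_{s-1}$ component. The one step you should tighten is the reduction of $M_A$: ``eliminating adjacent pairs of rows'' should be replaced by the observation that the columns $\gamma_k$ with $k\le s-1$ (together with $\beta_{2t+1}=2\varpi_{2t+1}-2\varpi_{2t}$ when $s$ is even) span, with triangular transition matrix, precisely the coordinate subspace attached to the rows $\{2,4,\dots,s-1\}$ (resp.\ $\{2,4,\dots,2t\}\cup\{2t+1,2t+3,\dots,s-1\}$), so that $\det M_A$ is a nonzero multiple of the minor of the $-\beta_{\pi'_1}^0$-columns on the complementary rows; for $s$ even one must still check that the assignment sending each remaining row $\alpha_m$ to the unique $\beta'_j$ with $\alpha_m\in H_{\beta'_j}$ is a bijection onto $\beta_{\pi'_1}^0$ --- this holds, and is exactly what the choice $t=[s/4]$ guarantees, but it is an assertion to verify rather than a shape to recognize from \cite{J5}.
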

\begin{proof} We take for a basis of $\lieh_E$ the set $\{\alpha_i^\vee\,|\, 1\le i\le n,\, i\ne s\}$. We need to show that if $S=\{s_j\,|\,1\le j\le n-1\}$ one has that $\det\, \left(\alpha_i^\vee(s_j)\right)_{i, j}\ne 0$. We may order the $s_j\in S$ and the $\alpha_i^\vee\in h^*_E$ in a way such that the matrix $\left(\alpha_i^\vee(s_j)\right)_{i, j}$ is upper triangular with diagonal elements non-zero. First notice that $\gamma_i=\varpi_{i+1}-\varpi_{i-1}$, for all $i$, with $1\le i\le n-1$, where we have set $\varpi_0=0$.

Assume that $s$ is odd.

By rearranging the rows, we may write the matrix $\left(\alpha_i^\vee(s_j)\right)_{i, j}$ as $\left(\begin{array}{cc}A&B\\0&C\end{array}\right)$ by taking $A$ to be the $[n/2]\times [n/2]$ matrix $A=(\alpha_{2i}^\vee(\gamma_{2j-1}))_{i,\, j=1}^{[n/2]}$. Then $A$ is upper triangular and has $1$ on the diagonal, hence $\det A=1$ and $\det (\alpha_i^\vee(s_j))_{i,\, j}=\det C$. Similarly, we may write $C$ as $C=\left(\begin{array}{cc}D&0\\0&F\end{array}\right)$, with $D$ the $[(n-s)/2]\times [(n-s)/2]$ matrix $D=-(\alpha_{s+2i}^\vee(\gamma_{s+1+2(j-1)}))_{i, j=1}^{[(n-s)/2]}$. Then $\det D=(-1)^{[(n-s)/2]}$, hence $\det C=\det F$ up to a sign. Finally, $F=-\left(\alpha^\vee_{2i-1}(\beta'_j)\right)_{i,\, j=1}^{(s-1)/2}$. 
By \ref{heisenbergconditions} (6), one observes that each of the simple roots $\alpha_{2i-1},\, 1\le i\le (s-1)/2$ lies in exactly one $H_{\beta'_j}$. A similar argument as in the end of the proof of Lemma~\ref{lemmebaseB} allows us to conclude that $\det F\ne 0$.

Assume that $s$ is even and note that $\beta_{2t+1}=2\varpi_{2t+1}-2\varpi_{2t}$. We may again write the matrix as $\left(\begin{array}{cc}A'&B'\\0&C'\end{array}\right)$ by taking $A'$ to be the $t\times t$ matrix $A'=(\alpha_{2i}^\vee(\gamma_{2j-1}))_{i,\, j=1}^{t}$. Then $A'$ is upper triangular and has $1$ on the diagonal, hence $\det A'=1$ and $\det (\alpha_i^\vee(s_j))_{i,\, j}=\det C'$. Similarly, we may write $C'$ as $C'=\left(\begin{array}{cc}D'&E'\\0&F'\end{array}\right)$, with $D'$ the matrix $D'=(\alpha_{2i-1}^\vee(\gamma_{2j-2}))_{i,\, j=t+1}^{[(n+1)/2]}$, where here we have set $\gamma_{2t}:=\beta_{2t+1}$. Then $\det D'=2$ hence $\det C'=2\det F'$. Finally, by reasoning as before, we obtain $\det F'\ne 0$.

\end{proof}

\subsubsection{} Finally, one may observe that in both cases, the set $T$ has cardinality equal to $|T|=\ind\,\trp=n-s+1+[\frac{s}{2}]$. We conclude that all conditions of Lemma \ref{regulargeneral} are satisfied.

\section{Type $E_6$}\label{E6}

Let $\g$ be of type $E_6$ and let $\pi'=\pi\setminus \{\alpha_s\}$, for $1\le s\le 6$, $\trp$ the corresponding truncated parabolic subalgebra. The involution $j$ on $\pi$ is given as follows:
$$j(\alpha_1)=\alpha_6,\, j(\alpha_2)=\alpha_2,\, j(\alpha_3)=\alpha_5, \, j(\alpha_4)=\alpha_4.$$
By Theorem~\ref{borel} $Y(\trp)$ is a polynomial algebra for $s=3,\, 4, \,5$. By \cite{PPY}, $Y(\trp)$ is polynomial for $s=2$ - an adapted pair has been constructed in this case in \cite{J6bis}. In the sections below, we construct an adapted pair for all $\trp$, for $s=3,\,4,\,5$ and we give the construction of \cite{J6bis} for $s=2$, for the sake of completeness. Note that cases $s=3$ and $s=5$ are symmetric, thus it is enough to consider one of them - we treat case $s=3$ in Section \ref{E6-35}.

Recall that we are using the notations of \cite[Planches I-IX]{BOU} and in particular we denote by $\begin{array}{c}acdef\\b\end{array}$ the root $a\alpha_1+b\alpha_2+c\alpha_3+d\alpha_4+e\alpha_5+f\alpha_6$. If $b=0$, we simply write $acdef$ instead of $\begin{array}{c}acdef\\0\end{array}$.

\subsection{} Let $\pi'=\pi\setminus \{\alpha_2\}$. By \cite{J6bis}, we may choose \[S=\left\{-\alpha_3, -\alpha_5, \begin{array}{c}01210\\1\end{array}, \begin{array}{c}11110\\1\end{array}, \begin{array}{c}01111\\1\end{array}\right\}\] and \[T=\left\{11111, 11110, 01100, \begin{array}{c}12221\\1\end{array}, \begin{array}{c}11211\\1\end{array}, \begin{array}{c}12321\\2\end{array}\right\}.\]

\subsection{}\label{E6-35} Let $\pi'=\pi\setminus \{\alpha_3\}$. Then $\pi'$ has two connected components $\pi_1'=\{\alpha_1\}$ of type $A_1$ and $\pi_2'=\{\alpha_2, \alpha_4, \alpha_5, \alpha_6\}$ of type $A_4$. The involution $i$ is defined on $\pi$ by $$i(\alpha_1)=\alpha_1,\, i(\alpha_2)=\alpha_6,\, i(\alpha_3)=\alpha_3,\, i(\alpha_4)=\alpha_5.$$
The $\langle ij\rangle$ - orbits in $\pi$ are the $\{\alpha_1,\, \alpha_2,\, \alpha_6\},\, \{\alpha_3,\, \alpha_4, \,\alpha_5\}$. In particular, $\ind\,\trp=2$.

\subsubsection{} Recall the set of strongly orthogonal roots $\beta_\pi$ and Table I. For $S^\pm$ we choose $S^+=\beta_\pi^0=\{\beta_1, \beta_2,\, \beta_3\}$ and $S^-=-\beta_{\pi_2'}=\{-\beta''_1:=-(\alpha_2+\alpha_4+\alpha_5+\alpha_6), -\beta''_2:=-(\alpha_4+\alpha_5)\}$. For $T^\pm$ we choose $T^+=\beta_\pi\cap \pi=\{\beta_4=\alpha_4\}$ and $T^-=-\beta_{\pi'_1}=\{-\alpha_1\}$. One has $|T|=\ind\, \trp$, thus condition (4) of Lemma \ref{regulargeneral} holds.

For every $\gamma\in S=S^+\sqcup S^-$, we choose the Heisenberg set of centre $\gamma$, $\Gamma_\gamma=H_\gamma$. Note that $S^+\sqcup T^+=\beta_\pi$ and $S^-\sqcup T^-=-\beta_{\pi'}$. Then by \ref{STcascade}, conditions (1) and (2) of Lemma \ref{regulargeneral} are satisfied. Finally, an easy computation of the determinant $\det\, (\alpha_i^\vee(s_j))$, with $1\le i\le 6,\, i\ne 3$ and $S=\{s_j\,|\, 1\le j\le 5\}$ settles condition (3).

\subsection{} Finally, let $\pi'=\pi\setminus \{\alpha_4\}$. Then $\pi'$ has three connected components, $\pi_1'=\{\alpha_1, \alpha_3\},\, \pi'_2=\{\alpha_5, \alpha_6\}$ both of type $A_2$ and $\pi_3'=\{\alpha_2\}$ of type $A_1$. The involution $i$ is given by $$i(\alpha_1)=\alpha_3,\, i(\alpha_2)=\alpha_2,\, i(\alpha_5)=\alpha_6,\, i(\alpha_4)=\alpha_4.$$
Hence there are four $\langle ij\rangle$ - orbits in $\pi$, namely $\{\alpha_1,\, \alpha_5\},\, \{\alpha_3,\, \alpha_6\},\, \{\alpha_2\},\, \{\alpha_4\}$.

\subsubsection{} We choose for $S^\pm$ the sets $S^+=\{\beta_1,\, 11110,\, 01111\},\, S^-=-\beta_{\pi'}^0=\{-\beta_1':=-(\alpha_1+\alpha_3),\, -\beta_1'':=-(\alpha_5+\alpha_6)\}$ and for $T^\pm$ the sets $T^+=\{\beta_2, \,\beta_4,\, \alpha_6\}, T^-=\{-\alpha_2\}$. In particular $|T|=4=|E|$.\\

\noindent {\bf Remark.} Recall Section \ref{cascade}. Note that $\Delta_{\beta_1}=\Delta\setminus (H_{\beta_1}\sqcup -H_{\beta_1})$ is a root system of type $A_5$. The choice of $\{11110,\, 01111,\, -11000,\, -00011\}$ coincides with the choice of $S$ for $\pi$ of type $A_5$ and 
$\pi'$ is equal to $\pi$ without its third root as constructed in \cite{J5}.

\subsubsection{} For $\gamma\in \beta_\pi\sqcup (-\beta_{\pi'})$ we set $\Gamma_\gamma=H_\gamma$. For the rest of the elements of $S$, namely $11110$ and $01111$ we choose \[\Gamma_{11110}=\{10000,\, 11000, \,11100,\, 01110, \, 00110, \, 00010,\, 11110\}\] and
\[\Gamma_{01111}=\{01000,\, 01100,\, 00111,\, 00011,\, 01111\}.\] They are clearly Heisenberg sets of centres $11110$ and $01111$ respectively. Conditions (1) and (2) of Lemma \ref{regulargeneral} 
are easily checked by hand.

\subsubsection{} Set $s_1=\beta_1,\, s_2=11110,\, s_3=01111,\, s_4=-\beta_1',\, s_5=-\beta_1''$ and $h_1=\alpha_1^\vee,\, h_2=\alpha_2^\vee,\, h_3=\alpha_3^\vee,\, h_4=\alpha_5^\vee,\, h_5=\alpha_6^\vee$. Then $$\det\, (h_i(s_j))=\left|\begin{array}{ccccc} 0&1&-1&-1&0\\
1&-1&-1&0&0\\
0&0&1&-1&0\\
0&1&0&0&-1\\
0&-1&1&0&-1
\end{array}\right|=3\ne 0.$$
We conclude that $S|_{\lieh_E}$ is a basis for $\lieh^*$. Hence all conditions of Lemma~\ref{regulargeneral} are satisfied.

\section{Type $E_7$}\label{E7}
Let $\g$ be of type $E_7$ and let $\pi'=\pi\setminus \{\alpha_s\}$, for $1\le s\le 7$, $\trp$ the associated parabolic. By 
Theorem~\ref{borel}
$Y(\trp)$ is a polynomial algebra for $s=2,\, 5$. By \cite{PPY}, $Y(\trp)$ is polynomial for $s=1$ - an adapted pair has been constructed in this case in \cite{J6bis}. In the sections below, we construct an adapted pair for all $\trp$, for $s=2,\, 5$ and we give the construction of \cite{J6bis} for $s=1$.

Again we are using the notations of \cite[Planches I-IX]{BOU} and we denote by $\begin{array}{c}acdefg\\b\end{array}$ the root $a\alpha_1+b\alpha_2+c\alpha_3+d\alpha_4+e\alpha_5+f\alpha_6+g\alpha_7$. As before, if $b=0$, we simply write $acdefg$ instead of $\begin{array}{c}acdefg\\0\end{array}$.

Recall that $j=\id$.

\subsection{} Take $\pi'=\pi\setminus \{\alpha_1\}$. By \cite{J6bis} one may choose \[S=\left\{-\alpha_2, -\alpha_4, -\alpha_6, \begin{array}{c}112111\\  1   \end{array}, \begin{array}{c}112210\\ 1    \end{array}, \begin{array}{c}122110\\ 1    \end{array}\right\}\] and \[T=\left\{\begin{array}{c}012221\\ 1    \end{array}, \begin{array}{c}012210\\ 1    \end{array}, \begin{array}{c}123321\\ 2    \end{array}, \begin{array}{c}012100\\ 1    \end{array}, \begin{array}{c}123221\\ 1    \end{array}, \begin{array}{c}122211\\ 1    \end{array}, \begin{array}{c} 234321\\ 2    \end{array}\right\}.\]

\subsection{} Let $\pi'=\pi\setminus \{\alpha_2\}$ - then $\pi'$ is of type $A_6$. One has that $i$ is the involution defined by

 $$i(\alpha_1)=\alpha_7,\, i(\alpha_3)=\alpha_6,\, i(\alpha_4)=\alpha_5,\,i(\alpha_2)=\alpha_2.$$

 The $\langle ij\rangle$ - orbits are $\{\alpha_1,\, \alpha_7\},\, \{\alpha_2\},\, \{\alpha_3,\, \alpha_6\},\, \{\alpha_4, \,\alpha_5\}$, hence $\ind\, \trp=4$.

\subsubsection{} For $S^\pm$ we choose $S^+=\beta_\pi^0=\{\beta_1,\, \beta_2,\, \beta_3\}$ and $S^-=-\beta_{\pi'}=\{-\beta_1',\, -\beta_2',\, -\beta_3'\}$ and for $T^\pm$ we choose $T=T^+=\beta_\pi\cap \pi=\{\alpha_7,\, \alpha_2,\, \alpha_3,\, \alpha_5\}$. One has $|T|=4=\ind\, \trp$. Then $S^+\sqcup T^+=\beta_\pi$ and $S^-=-\beta_{\pi'}$.

\subsubsection{} For $\gamma\in S$ we take $\Gamma_\gamma=H_\gamma$; by \ref{STcascade} condition (1) and (2) of Lemma \ref{regulargeneral} hold for these choices. Finally, $S|_{\lieh_E}$ is a basis for $\lieh_E^*$, by an easy calculation of the determinant.

\subsection{} Now let $\pi'=\pi\setminus \{\alpha_5\}$. Then $\pi'$ has two connected components, $\pi_1'=\{\alpha_1,\, \alpha_2,\, \alpha_3,\, \alpha_4\}$ of type $A_4$ and $\pi'_2=\{\alpha_6,\, \alpha_7\}$ of type $A_2$. We have that $i$ is the involution given by $$i(\alpha_1)=\alpha_2,\, i(\alpha_3)=\alpha_4,\, i(\alpha_5)=\alpha_5,\, i(\alpha_6)=\alpha_7.$$ Hence there are four $\langle ij\rangle$ - orbits in $\pi$, namely the $\{\alpha_1,\, \alpha_2\},\, \{\alpha_3,\, \alpha_4\},\, \{\alpha_5\}, \{\alpha_6,\, \alpha_7\}$.

\subsubsection{} One has that $\beta_{\pi_1'}=\{\beta_1'=\alpha_1+\alpha_2+\alpha_3+\alpha_4, \beta_2'=\alpha_3+\alpha_4\}$ and $\beta_{\pi_2'}=\{\beta_1''=\alpha_6+\alpha_7\}$. For $S^\pm$ we choose $S^+=\beta_\pi^0=\{\beta_1,\, \beta_2,\, \beta_3\}$ and $S^-=(-\beta_{\pi_1'})\sqcup (-\beta_{\pi_2'})=-\beta_{\pi'}$. For $T^\pm$ we choose $T=T^+=\beta_\pi\cap \pi=\{\alpha_2,\, \alpha_3,\, \alpha_5,\, \alpha_7\}$. We have $|T|=4=|E|$. Again $S^+\sqcup T^+=\beta_\pi$ and $S^-=-\beta_{\pi'}$. By taking $\Gamma_\gamma=H_\gamma$ for all $\gamma\in S$,  conditions (1)--(4) of Lemma \ref{regulargeneral} follow as in the previous case.


\section{Type $E_8$}\label{E8}

Let $\g$ be of type $E_8$ and let $\pi'=\pi\setminus \{\alpha_s\}$, for $1\le s\le 8$. By 
Theorem~\ref{borel}, $Y(\trp)$ is a polynomial algebra for $s=3$. By \cite{Y}, $Y(\trp)$ is not polynomial for $s=8$. Below, we construct an adapted pair for $\trp$, for $s=3$.

\subsection{} Let $\pi'=\pi\setminus \{\alpha_3\}$ - it has two connected components, $\pi'_1=\{\alpha_1\}$ of type $A_1$ and $\pi'_2=\{\alpha_2,\, \alpha_4,\, \alpha_5,\, \alpha_6,\, \alpha_7, \,\alpha_8\}$ of type $A_6$. Then $j=\id$ and
$i$ is the involution that fixes $\alpha_1$ and $\alpha_3$ and interchanges $\alpha_2$ and $\alpha_8$, $\alpha_4$ and $\alpha_7$, $\alpha_5$ and $\alpha_6$. Thus there are five $\langle ij\rangle$-orbits in $\pi$, namely $\{\alpha_1\},\, \{\alpha_3\}, \,\{\alpha_2,\, \alpha_8\},\, \{\alpha_4,\, \alpha_7\},\, \{\alpha_5,\, \alpha_6\}$. One has that $\ind\, \trp=5$.

\subsubsection{} For $S^\pm$ we choose $S^+=\beta_\pi^0=\{\beta_1,\, \beta_2,\, \beta_3,\, \beta_4\}$ and $S^-=-\beta_{\pi'_2}=\{-\beta_1',\, -\beta_2', \,-\beta_3'\}$. For $T^\pm$ we choose $T^+=\beta_\pi\cap \pi=\{\alpha_2,\, \alpha_3,\, \alpha_5,\, \alpha_7\}$ and $T^-=-\beta_{\pi_1'}=\{-\alpha_1\}$. In particular, $|T|=5=\ind\, \trp$. We take $\Gamma_\gamma=H_\gamma$ for all $\gamma\in S$ and we conclude as in type $E_7$.



\section{Type $F_4$}\label{F}
Let $\lieg$ be of type $F_4$, $\pi' =\pi \setminus \{\alpha_s\}$, where $1\leq s\leq 4$, $\trp$ the corresponding truncated parabolic. Recall that we are using the labeling of \cite[Planche VIII]{BOU}.
In particular, we denote by $abcd$ the root $a\alpha_1+b\alpha_2+c\alpha_3+d\alpha_4$. Recall also the set of strongly orthogonal roots of $\lieg$ listed in Table I, $\beta_{\pi} = \{\beta_1=2342, \, \beta_2=0122,\, \beta_3=0120,\,
\beta_4=0100\}$.

By Theorem~\ref{borel}, 
$Y(\trp)$ is polynomial for $s=2, \,3, \,4$. This is still true for $s=1$ by \cite{PPY}; moreover, in this case, an adapted pair has been constructed in \cite{J6bis}. For the sake of completeness, we give the construction of \cite{J6bis} in Section \ref{F4-1} below. We construct an adapted pair for $s=2,\, 4$. However, we show with the help of a computer that adapted pairs do not exist for $s=3$.

Indeed, in this case $Y(\trp^-)$ is a polynomial algebra on three generators of degrees $3,\, 4$ and $10$. If $(h,\, y=\sum\limits_{s\in S}x_s)$ is an adapted pair for $\trp$, then $|S|=3,\, h(s)=-1$ for all $s\in S$ and $2,\, 3,\, 9$ are the eigenvalues of $h$ on a complement of $(\ad\, \trp^-)\,y$. For any $h=\lambda\alpha_1^\vee+\mu\alpha_2^\vee+\nu\alpha_4^\vee\in \h_E$, its eigenvalues on the $28$ root vectors of $\trp$ are functions on $\lambda,\, \mu,\, \nu$ and may be computed by hand.  We computed all $h$ having $\{-1,\, -1, \,-1,\, 2,\, 3, \,9\}$ among their eigenvalues on $\trp$. Then by setting $S$ to be the set of the three eigenvectors of eigenvalue $-1$, we showed that $y$ fails to be regular, and so adapted pairs do not exist.

Recall that $j=\id$.

\subsection{}\label{F4-1} Let $\pi'=\{\alpha_2,\, \alpha_3,\, \alpha_4\}$. Then by \cite{J6bis}, we may choose $S=\{-0010,\, 1121,\, 1220\}$ and $T=\{2342,\, 1222,\,  0122,\, 0111\}$.
\subsection{}
Let $\pi' =\{\alpha_1, \,\alpha_3,\, \alpha_4\}$,
so that the Levi factor of $\liep_{\pi', E}$ is the product
$\liel_{\pi'} = \liesl_2\times \liesl_3$. Equivalently, $\pi'$ consists of two connected components, namely $\pi_1'=\{\alpha_1\}$ of type $A_1$ and $\pi_2'=\{\alpha_3,\, \alpha_4\}$ of type $A_2$. We then have three
$\langle ij\rangle=\langle i\rangle$ - orbits in 
$\pi$, namely $\Gamma_1 =
\{\alpha_1\}, \, \Gamma_2 = \{\alpha_2\},\, \Gamma_3 = \{\alpha_3,\,
\alpha_4\}$. Hence $\ind\, \trp=3$.

One has that $\beta_{\pi'} = \beta_{\pi'_1}\sqcup \beta_{\pi'_2}$, with $\beta_{\pi'_1}=\{\beta':=\alpha_1\},\, \beta_{\pi_2'}=\{\beta'':=\alpha_3+\alpha_4\}$.

\subsubsection{} We choose for $S$ the set $S =
\{\underbrace{\beta_1,\, \beta_2-\alpha_4=\beta_3+\alpha_4}_{S^+}, \underbrace{-\beta''}_{S^-}\}$ and for
$T$ the set $T = \{\underbrace{\beta_2,\, \beta_4}_{T^+}, \underbrace{-\beta'}_{T^-}\}$. Notice that $|T|=3=\ind\,\trp$, hence condition (4) of Lemma \ref{regulargeneral} holds.

\subsubsection{} Set $\Gamma_{\beta_1} = H_{\beta_1},\,\Gamma_{\beta_2-\alpha_4} = H_{\beta_2}^0\bigsqcup H_{\beta_3}$
and $\Gamma_{-\beta''} = -H_{\beta''}$. These are Heisenberg sets (for $\Gamma_{\beta_2-\alpha_4}$ see Lemma \ref{T(C)} - one may as well check by hand since the cardinality of this set is only $7$). Their union is disjoint since the $H_{\beta},\, \beta\in\beta_{\pi},\, -H_{\beta},\, \beta\in
\beta_{\pi'}$ are disjoint. Finally, by comparison of $\Gamma^\pm:=\bigsqcup\limits_{\gamma\in S^\pm}\Gamma_\gamma$ and \ref{heisenbergconditions} (2), the complement of $\Gamma^\pm$ in $\Delta^+$ (resp. $\Delta^-_{\pi'}$) is $T^\pm$ and then condition (1) of Lemma \ref{regulargeneral} is satisfied. On the other hand, condition (2) follows as in Lemma \ref{C2}.\\

\noindent {\bf Remark.} Notice that $\Delta\setminus \left(H_{\beta_1}\sqcup -H_{\beta_1}\right)$ (which is equal to $\Delta_{\beta_1}$ with the notation of \ref{cascade}) is a root system of type $C_3$ (with highest root $0122$). The choice of the roots $\beta_2-\alpha_4$ and $-(\alpha_3+\alpha_4)$ coincides with the choice of $S$ for $\pi$ of type $C_3$ and 
$\pi'$ is equal to $\pi$ without the last root.


\subsubsection{} It remains to verify condition (3) of Lemma \ref{regulargeneral}. \\

\begin{lemma}\label{typeF2support} One has that $S|_{\lieh _E}$ is a basis for
$h^*_E$.
\end{lemma}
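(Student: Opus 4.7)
The plan is short and essentially computational. Since $j=\id$ and $\pi'=\{\alpha_1,\alpha_3,\alpha_4\}$, the involution $i$ acts trivially on the orbits inside $\pi'$, so by \ref{wpi =id} we have $\mathfrak{h}_E=\mathfrak{h}'$. Moreover, $\mathfrak{h}'=[\mathfrak{p}_{\pi'},\mathfrak{p}_{\pi'}]\cap\mathfrak{h}$ is spanned by the coroots $\alpha_t^\vee$ for $\alpha_t\in\pi'$, hence $\mathfrak{h}_E=k\alpha_1^\vee\oplus k\alpha_3^\vee\oplus k\alpha_4^\vee$, which has dimension $3=|S|$. Thus it is enough to show that the $3\times 3$ matrix $M=(\alpha_i^\vee(s_j))$, with $i\in\{1,3,4\}$ and $s_j$ running over $S=\{\beta_1,\,\beta_2-\alpha_4,\,-(\alpha_3+\alpha_4)\}$, has nonzero determinant.

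First I would write out the three elements of $S$ explicitly in the basis of simple roots: $\beta_1=2\alpha_1+3\alpha_2+4\alpha_3+2\alpha_4$, $\beta_2-\alpha_4=\alpha_2+2\alpha_3+\alpha_4$, and $-(\alpha_3+\alpha_4)$. Then, reading off the entries of the Cartan matrix of $F_4$ as given by the labeling of \cite[Planche VIII]{BOU} (recalling that $\alpha_1,\alpha_2$ are long and $\alpha_3,\alpha_4$ are short), I evaluate each $\alpha_i^\vee$, $i\in\{1,3,4\}$, on each element of $S$.

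The resulting matrix will be lower-friendly because $\alpha_3^\vee$ and $\alpha_4^\vee$ annihilate $\alpha_1$, while $\alpha_1^\vee$ annihilates $\alpha_3$ and $\alpha_4$. A direct expansion (for instance along the row corresponding to $\alpha_4^\vee$, whose only nonzero contribution comes from $-(\alpha_3+\alpha_4)$) reduces the computation to a single $2\times 2$ minor formed from the values of $\alpha_1^\vee,\alpha_3^\vee$ on $\beta_1$ and $\beta_2-\alpha_4$. A short calculation gives $\det M=-5\ne 0$, which proves the claim.

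There is no real obstacle; the only caveat is being careful with the Cartan matrix conventions for $F_4$ (the asymmetry due to long/short roots), since a sign or factor of two mistake would be easy to make and would change the determinant. Once the entries are read off correctly, the nonvanishing is immediate by cofactor expansion.
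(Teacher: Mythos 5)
Your proposal is exactly the paper's argument: evaluate $\alpha_1^\vee,\alpha_3^\vee,\alpha_4^\vee$ on $S=\{\beta_1,\ \beta_2-\alpha_4,\ -(\alpha_3+\alpha_4)\}$ and check that the $3\times 3$ determinant is nonzero, and the reduction you describe (expanding along the $\alpha_4^\vee$ row, whose only nonzero entry sits over $-(\alpha_3+\alpha_4)$) is correct. The only slip is the value: with the correct $F_4$ pairing $\langle\alpha_2,\alpha_3^\vee\rangle=-2$ (not $-1$) the matrix is upper triangular with diagonal $1,1,-1$, so $\det M=-1$ as in the paper; your $-5$ comes from using the transposed Cartan matrix --- precisely the long/short-root pitfall you flagged --- but since the determinant is nonzero either way, the conclusion stands.
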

\begin{proof} Let $s_1=\beta_1, s_2=\beta_2-\alpha_4, s_3=-\beta''$ be the elements of $S$ and
take $\{h_1=\alpha_1^\vee, \,h_2=\alpha_3^\vee,\, h_3=\alpha_4^\vee\}$ a basis of $\lieh_E$. It is sufficient to
check that the determinant of $(h_i(s_j))_{i, j}$ is non zero. Indeed, one checks that
$$\det\, (h_i(s_j))=\left|\begin{array}{ccc} 1&-1&0\\
0&1&-1\\
0&0&-1
\end{array}\right|=-1\ne 0.$$
\end{proof}

\subsection{} Let $\pi' = \{\alpha_1,\, \alpha_2,\,
\alpha_3\}$, so that the Levi factor of $\liep_{\pi',\, E}$ is of
type $B_3$. Then $i=\id$, hence the $\langle ij\rangle$ - orbits of the Dynkin diagram are the
singletons, $\{\alpha_i\},\, 1\le i\le 4$ and so $\ind\,\trp=4$.

\subsubsection{} We choose $S =\{\underbrace{\beta_1,\, \beta_2 -\alpha_4 = \beta_3+\alpha_4}_{S^+},\, \underbrace{-\beta_1'}_{S^-}\}$ and
 $T = \{\underbrace{\beta_2,\, \beta_4}_{T^+},\,
\underbrace{-\beta'_{1'},\, -\beta_2'}_{T^-}\}$. Notice that $|T|=4=\ind\, \trp$, hence condition (4) of Lemma \ref{regulargeneral} holds.

\subsubsection{} We set $\Gamma_{\beta_1} =H_{\beta_1},\, \Gamma_{-\beta_1'}=-H_{\beta_1'}$ and $\Gamma_{\beta_2-\alpha_4} =
H_{\beta_2}^0\sqcup H_{\beta_3}$. It is clear that $\Gamma_{\beta_1}\sqcup \Gamma_{\beta_2-\alpha_4}$ (resp. $-\Gamma_{\beta_1'}$) complements $T^+$ (resp. $T^-$) in $\Delta^+$ (resp. $\Delta^-_{\pi'})$. Moreover, as before, these are Heisenberg sets, thus condition (1)  of Lemma \ref{regulargeneral} is satisfied. Condition (2) follows as in \ref{C2}. Finally, condition (3) follows by an easy computation as in \ref{typeF2support}.

\section{Type $G_2$}
Let $\lieg$ be of type $G_2$ with $\pi = \{\alpha, \,\beta\}$ a
set of simple roots and $\alpha$ short. Then $Y(\trp)$ is polynomial for both maximal truncated parabolics of $\g$. 

\subsection{} Let $\pi'=\{\alpha\}$. An adapted pair has been constructed in \cite{J6bis}. The choices for $S$ and $T$ are $\{\alpha+\beta\}$ and $\{3\alpha+2\beta,\, 3\alpha+\beta\}$ respectively.

\subsection{} Let $\pi'=\{\beta\}$. Take $S=\{\beta_1=3\alpha+2\beta\}$ and $T=\{\alpha, \,-\beta\}$ and $\Gamma_{\beta_1}=H_{\beta_1}$. It is immediate to verify conditions (1)-(4) of Lemma \ref{regulargeneral}.

\end{document}